\numberwithin{equation}{section}
\newtheorem{lemma}{Lemma}[section]
\newtheorem{prop}[lemma]{Proposition}
\newtheorem{thm}[lemma]{Theorem}
\newtheorem{cor}[lemma]{Corollary}
\theoremstyle{definition}
\newtheorem{rmk}[lemma]{\bf Remark}
\newtheorem{defn}[lemma]{\bf Definition}
\newcommand{\N}{\mathbb{N}}
\newcommand{\R}{\mathbb{R}}
\renewcommand{\epsilon}{\varepsilon}
\newcommand{\ra}{\rightarrow}
\newcommand{\da}{\downarrow}
\newcommand{\HD}{\mathcal{H}}
\newcommand{\norm}[1]{\left \lVert #1 \right \rVert}
\newcommand{\abs}[1]{\left \lvert #1 \right \rvert}
\newcommand{\loc}{\mathrm{loc}}
\newcommand{\ol}{\overline}
\DeclareMathOperator{\divv}{\mathrm{div}}
\DeclareMathOperator{\dist}{\mathrm{dist}}
\DeclareMathOperator{\spt}{\mathrm{supp}}
\newcommand{\nocontentsline}[3]{}
\let\origcontentsline\addcontentsline
\newcommand\stoptoc{\let\addcontentsline\nocontentsline}
\newcommand\resumetoc{\let\addcontentsline\origcontentsline}
\title{Elliptic unique continuation below the Lipschitz threshold}
\author{Cole Jeznach}
\date{\today}
\subjclass[2020]{Primary: 35A02, 35J15. Secondary: 35J10}
\keywords{Unique continuation, Almgren frequency function} 
\begin{document}

\begin{abstract}
	In this article, we investigate unique continuation principles for solutions $u$ of uniformly elliptic equations of the form $- \divv (A \nabla u) =0$ when $A$ is less regular than Lipschitz. For general matrices $A$, we prove that strong unique continuation holds provided  $A$ has modulus of continuity $\omega$ satisfying the Osgood condition, $\int_0^1 \omega(t)^{-1} dt = \infty$, plus some other mild hypotheses. Along with the counterexamples of \cite{Mandache-ceg}, this shows that the sharp condition on $A$ that guarantees unique continuation is essentially that $A$ is log-Lipschitz.
\end{abstract}

\maketitle

\section{Introduction}

Questions regarding unique continuation for solutions of linear PDEs are among the most fundamental. A linear partial differential operator $\mathcal{L}$, defined in a connected domain $\Omega \subset \R^n$, is said to satisfy the \textit{weak unique continuation principle} if every solution $u$ of $\mathcal{L}u = 0$ in $\Omega$ that vanishes on an open subset of $\Omega$ must vanish identically: $u \equiv 0$. If all solutions are  $L^2_{\loc}(\Omega)$, then we say that the operator $\mathcal{L}$ satisfies the \textit{strong unique continuation principle} provided that whenever $u$ vanishes to infinite order (in the $L^2$-sense) at some point $x \in \Omega$, i.e.,
\begin{align} \label{eqn:vanish_order}
	\lim_{r \ra 0} r^{-2N} \fint_{B_r(x)} u^2 \, dx = 0 \text { for all } N >0,
\end{align}
 then $u \equiv 0$. If the condition above fails for large $N$, then the \textit{vanishing order of $u$ at $x$} is defined as the supremum over all $ N \ge 0$ for which the limit in \eqref{eqn:vanish_order} vanishes. Understanding how the size and regularity of the coefficients defining $\mathcal{L}$ impact unique continuation remains a subtle and rich area of investigation, with many fundamental questions still open. Moreover, unique continuation principles (along with the techniques used to develop them) have found widespread applications, including in inverse problems \cite{KSU07, Haberman15, CR16}, harmonic measure and nodal geometry \cite{DF88, Logunov18-poly, Logunov18-lower, LMNN21, Tolsa23}, quantitative uniqueness results for PDE \cite{ESS03a, ESS03b, KSW15,LMNN25}, and Anderson localization \cite{BK05}.

The focus of this article is linear, symmetric, uniformly elliptic operators
\begin{align}\label{eqn:op}
	\mathcal{L} u  = -\divv(A \nabla  u )   , \, \, \text{ in } B_1 \subset \R^n.
\end{align}
Of course when $A = I$ is the identity, solutions of \eqref{eqn:op} are harmonic, and hence analytic; as such, $-\Delta$ satisfies the strong unique continuation principle. In his pioneering work \cite{Carleman-planar}, Carleman proved that strong unique continuation also holds for operators of the form \eqref{eqn:op} in two dimensions, provided that $A \in C^2(B_1)$. The method he introduced via weighted inequalities--now known as Carleman estimates-- has become a ubiquitous tool for proving unique continuation results across a wide range of PDEs, well beyond the elliptic setting. Since then, the role of the smoothness condition on $A$ has largely been clarified. When $n = 2$, strong unique continuation holds even for $A$ which are merely bounded measurable \cite{BN55,Schulz98}. For dimensions $n \ge 3$, improvements by \cite{Aronszajn,Agmon}, and finally \cite{GL86} showed that Lipschitz regularity of $A$ suffices to guarantee strong unique continuation. Such results are valid even when $\mathcal{L}$ has singular lower-order terms; if $\mathcal{L}u = - \divv(A \nabla u) + W \cdot \nabla u + V u$ with $A$ Lipschitz, $W \in L^{n}$, and $V \in L^{n/2}$ then strong unique continuation still holds \cite{JK85, Wolff92, KT01}. Moreover, these exponent thresholds are essentially sharp.

In the other direction, Miller showed in dimensions $n \ge 3$ \cite{Miller-ceg} that weak unique continuation for \eqref{eqn:op} may fail if we only assume $A$ is H\"{o}lder continuous (see also the earlier example of Plis in non-divergence form \cite{Plis-ceg}). Later, Mandache improved the counterexample of Miller and showed that there is a matrix $A$ which is H\"{o}lder continuous of every exponent $\alpha \in(0,1)$ but for which $\mathcal{L}$ does not satisfy the weak unique continuation principle. In fact, Mandache provided for each modulus of continuity $\omega:[0,1] \ra [0,\infty)$ satisfying
\begin{align}\label{eqn:non_osgood}
	\int_0^1 \dfrac{1}{\omega(t)} \, dt < \infty,
\end{align}
an associated symmetric, uniformly elliptic matrix $A$ with modulus of continuity $\omega$ for which weak unique continuation fails for the operator $\mathcal{L}$ \cite{Mandache-ceg}. Taking $\omega(t) = t \log(1/t)^p$ for $p > 1$ then gives the claimed matrix $A$. It is worth noting that the examples of both \cite{Miller-ceg} and \cite{Mandache-ceg} are anisotropic, i.e., $A(x)$ cannot be written as $A(x) = a(x)I$ for any scalar function $a$.

Unlike Carleman's, the methods of \cite{GL86, GL87} relied on monotonicity properties of the so-called \textit{Almgren frequency function} $N_u^A(r)$, which for $\mathcal{L} = -\Delta$ takes the form
\begin{align*}
	N_u^I(r) = \dfrac{r^2 \fint_{B_r} \abs{\nabla u}^2 \, dx }{\fint_{\partial B_r} u^2 \, d\sigma}.
\end{align*}
This frequency function, which measures in a scale-invariant way the dominant homogeneity of $u$ in $B_r$, is known to be almost-monotone and bounded in $r$ if $A$ is Lipschitz. Such properties of this frequency function and its relatives have proven to be essential in quantitative unique continuation and free boundary problems. Indeed, bounds on the frequency function allow one to deduce doubling inequalities for the measure $u^2 \,dx$ and obtain structural information on the infinitesimal behavior of the solution $u$ about the origin. It is impossible to list all such applications, so instead we point the reader to some of the recent results in quantitative unique continuation such as \cite{CNV15, NV17-annals, NV17, LM18-23, LM18, Logunov18-poly, Gallegos23, CLMM24,  Foster25}, as well as the references \cite{LM-notes, Spolaor22, SVG24} for further reading. See also the recent results of \cite{Davey25}, where the author reproves elliptic unique continuation results originally obtained via the Carleman method using instead monotonicity properties of the Almgren frequency.
 
\vspace{0.5em}

The purpose of this article is to study the monotonicity properties of the Almgren frequency (and associated unique continuation principles) when $A$ is less regular than Lipschitz. In this general anisotropic setting, we shorten the gap between the counterexamples for unique continuation provided by \cite{Mandache-ceg} and the sufficiency of the Lipschitz condition used in \cite{GL86} to prove strong unique continuation. Our main result applies to matrices $A$ which admit a modulus of continuity $\omega$ for which \eqref{eqn:non_osgood} fails (such $\omega$ are said to satisfy the \textit{Osgood} condition).
\begin{thm}\label{thm:main_anis}
	Let $u \not \equiv 0$ solve 
	\begin{align*}
		-\divv(A \nabla u) = 0 \text{ in } B_1 \subset \R^n,
	\end{align*}
	where $A(0) = I$, $A$ is symmetric and uniformly elliptic with constant $\lambda \in (0,1)$, and $A$ has modulus of continuity $\omega(t)$ for which $\omega(t)$ and $\phi(t) \coloneqq \omega(t)/t$ satisfy
	\begin{enumerate}
		\item $\int_0^1 \omega(t)^{-1} \, dt = \infty$, \label{cond:osgood}
		\item $\phi(st) \le C \phi(s) g(t)$ for some $C> 1$ and all $0 < s, t \le C^{-1}$, \label{cond:semi}
		\item $g \in L^1((0, 1); \R_+)$ satisfies $g(t) \le C g(\gamma t)$ for $0 < t <C^{-1}$, $\gamma \in (1/2, 1)$. \label{cond:int}
	\end{enumerate}
	Then the vanishing order of $u$ at $0$ does not exceed $N$, where $N$ is some value depending just on $n$, $\lambda$, the modulus of continuity $\omega$, and the frequency $N_u^A(1)$ of $u$ in $B_1$.
\end{thm}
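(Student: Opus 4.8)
The plan is to run an Almgren-type frequency argument adapted to the equation $-\divv(A\nabla u)=0$, using the normalization $A(0)=I$ crucially. For $0<r<1$ set
\begin{align*}
	D(r) = \int_{B_r}\langle A\nabla u, \nabla u\rangle\, dx, \qquad H(r) = \int_{\partial B_r} u^2\, d\sigma, \qquad N(r) = \frac{rD(r)}{H(r)},
\end{align*}
defined whenever $H(r)>0$ (so $N_u^A(1)$ is $N(1)$ in this notation). Since condition~\eqref{cond:int} is exactly the Dini condition on $A$, solutions have continuous gradient, all the boundary integrals below are well defined, and the usual doubling inequality for $\int_{B_r}u^2$ reduces the theorem to showing $N(r)\le \Nbar$ on $(0,1]$ for some $\Nbar$ depending only on $n,\lambda,\omega$ and $N(1)$; indeed the vanishing order of $u$ at $0$ is bounded by $\limsup_{r\to0}N(r)$.

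The first task is the differential identities. From $-\divv(A\nabla u)=0$ and the divergence theorem, $D(r) = \int_{\partial B_r}u\,\langle A\nabla u, \nu\rangle\, d\sigma$ and $D'(r) = \int_{\partial B_r}\langle A\nabla u, \nabla u\rangle\, d\sigma$, while differentiating $H$ in polar coordinates and replacing $\partial_\nu u$ by $\langle A\nabla u,\nu\rangle$ gives
\begin{align*}
	H'(r) = \frac{n-1}{r}H(r) + 2D(r) + \mathcal{E}_H(r), \qquad \abs{\mathcal{E}_H(r)} \lesssim \omega(r)\, H(r)^{1/2} D'(r)^{1/2},
\end{align*}
where $\mathcal{E}_H$ is produced solely by $A-I$ and is estimated using only $\norm{A-I}_{L^\infty(B_r)}\le\omega(r)$. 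Testing $-\divv(A\nabla u)=0$ against the multiplier $x\cdot\nabla u$ yields the Rellich--Ne\v{c}as/Pohozaev identity
\begin{align*}
	D'(r) = \frac{n-2}{r}D(r) + \frac1r R(r) + 2\int_{\partial B_r}\langle A\nabla u, \nu\rangle\,\partial_\nu u\, d\sigma, \qquad R(r) := \int_{B_r}\langle(x\cdot\nabla A)\nabla u, \nabla u\rangle\, dx,
\end{align*}
exact when $A$ is smooth. Forming $\frac{d}{dr}\log N = \frac1r + \frac{D'}{D} - \frac{H'}{H}$, the $\tfrac{1}{r}$-singular principal parts cancel, the leftover boundary term is $\ge0$ up to an $O(\omega(r))$ error by Cauchy--Schwarz, and one is left with
\begin{align*}
	\frac{d}{dr}\log N(r) \ge \frac{R(r)}{rD(r)} - C\big(\phi(r) + \omega(r)\big)\,\Psi\big(N(r)\big)
\end{align*}
for a harmless increasing $\Psi$; in the Lipschitz case $\abs{R(r)} \le \mathrm{Lip}(A)\int_{B_r}\abs{x}\abs{\nabla u}^2 \lesssim \mathrm{Lip}(A)\, rD(r)$ and the right-hand side is $\ge -C$, which is the classical almost-monotonicity.

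The main obstacle is that $\nabla A$ does not exist pointwise below the Lipschitz threshold, so the term $R(r)$ must be treated indirectly. Here I would mollify: replace $A$ by $A_\delta = A * \rho_\delta$ at a scale $\delta=\delta(r)$ comparable to $r$ (possibly to $r$ divided by the current frequency), so that $\norm{A-A_\delta}_{L^\infty}\lesssim\omega(\delta)$ and $\norm{\nabla A_\delta}_{L^\infty}\lesssim\omega(\delta)/\delta = \phi(\delta)$, and re-derive the identities with $A_\delta$ in place of $A$; then $u$ solves $-\divv(A_\delta\nabla u) = \divv G_\delta$ with $\abs{G_\delta}\le\omega(\delta)\abs{\nabla u}$. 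This replaces $R(r)$ by $R_\delta(r) = \int_{B_r}\langle(x\cdot\nabla A_\delta)\nabla u,\nabla u\rangle\, dx$, controlled by $\phi(\delta)\int_{B_r}\abs{x}\abs{\nabla u}^2 \lesssim \phi(\delta)\, rD(r)$, at the cost of new errors from $G_\delta$: the zeroth-order ones are again $O(\omega(\delta))$ times natural quantities, while the single $G_\delta$-contribution carrying a factor $x\cdot\nabla^2 u$ I would bound by an interior second-derivative estimate for the now-smooth mollified operator together with the doubling bound $D(2r)\lesssim e^{CN(2r)}D(r)$, valid on any range of scales where $N$ has already been controlled. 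After this surgery one arrives, on any interval $(\rho,r_0)$ on which $N\le M$ and $\omega(r_0)$ is small, at a differential inequality that, in the variables $s=\log(1/r)$ and $\nu(s)=N(e^{-s})$, reads schematically $\nu'(s)\le \mathcal{F}\big(s,\nu(s)\big)$ with $\mathcal{F}$ assembled from $\omega,\phi$ and the doubling exponent.

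The final step integrates this inequality inside a continuity (bootstrap) argument, and the three hypotheses on $\omega$ enter exactly here. Condition~\eqref{cond:int} makes the total perturbation finite, $\int_0^\infty\omega(e^{-s})\,ds = \int_0^1\phi(t)\,dt<\infty$; condition~\eqref{cond:semi}, $\phi(st)\le C\phi(s)\phi(t)$, is what keeps the doubling constants and the errors accumulated over dyadic scales $r=2^{-j}$ summable once the mollification scale is chosen frequency-dependent; and the Osgood condition~\eqref{cond:osgood} --- the decisive one, precisely the threshold of the Mandache counterexamples --- plays for this frequency ODE the role of Osgood's uniqueness criterion, forbidding $\nu(s)$ to reach $+\infty$ at finite $s$, i.e.\ preventing $N(r)\to\infty$ as $r\to0$. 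Starting from the finite value $N(1)=N_u^A(1)$ one propagates the bound $N\le\Nbar$ from scale $r_0$ down to $0$ --- using throughout that $H(r)>0$ so long as $N$ stays finite, so that $N$ never ceases to be defined and the induction does not degenerate --- obtaining $N(r)\le\Nbar$ on all of $(0,1]$ with $\Nbar=\Nbar(n,\lambda,\omega,N_u^A(1))$. This bounds the vanishing order of $u$ at $0$ and completes the proof.
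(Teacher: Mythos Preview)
Your strategic outline is correct and matches the paper: mollify $A$ at a frequency-dependent scale, run the Almgren computation for the smooth problem, and close with an Osgood-type ODE argument using conditions \eqref{cond:osgood}--\eqref{cond:semi}. The gap is in the implementation of the mollification step.

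You keep $u$ as the solution and write $-\divv(A_\delta\nabla u)=\divv G_\delta$; the Rellich identity then produces the term $\int_{B_r} G_\delta\cdot(x\cdot\nabla)\nabla u$, and you propose to control $\nabla^2 u$ via ``an interior second-derivative estimate for the now-smooth mollified operator.'' This does not work: $u$ is \emph{not} a solution of the mollified equation, and the right-hand side $\divv G_\delta$ is only in $H^{-1}$, so interior $W^{2,2}$ estimates are unavailable. Attempting to differentiate the equation brings back $\partial_k G_\delta = (\partial_k A)\nabla u + (A-A_\delta)\partial_k\nabla u$, and $\partial_k A$ does not exist. The Dini hypothesis \eqref{cond:int} gives only $\nabla u\in C^0$, not $\nabla^2 u\in L^2_{\loc}$, so there is no independent route to the needed bound. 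The proposed use of the doubling inequality $D(2r)\lesssim e^{CN(2r)}D(r)$ does not rescue this, since the missing ingredient is a second-derivative estimate, not an energy comparison across scales.

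The paper avoids this entirely by \emph{replacing} $u$ rather than the equation: it lets $v$ solve $-\divv(\ol{A}\,\nabla v)=0$ in $B_r$ with $v=u$ on $\partial B_r$, where $\ol{A}=A*\eta_\epsilon$ and $\epsilon=r/N$. Now $v$ is a genuine solution of a smooth equation, so the full Rellich/Almgren computation for $v$ goes through with error $O(\phi(\epsilon)+\omega(r)/r)$ and no second derivatives of $u$ ever appear (Lemma~\ref{lem:almost_monoton}). The transfer back to $u$ is done at the level of $N$ itself via the energy bound $\int_{B_r}|\nabla(u-v)|^2\lesssim\omega(\epsilon)\int_{B_r}|\nabla u|^2$ and a trace estimate on $\partial B_{r(1-s/N)}$ (Lemma~\ref{lem:approx_v}); this yields the discrete step $N(r(1-1/N))\le N+C\,r\,\psi(N/r)$ (Lemma~\ref{lem:dichot}), after which the Osgood/integrability/submultiplicativity conditions close the argument exactly as you describe (Corollary~\ref{cor:growth_disc}, Theorem~\ref{thm:freq_bdd}). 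A secondary point: the paper weights $H$ by $\mu(x)=\langle A(x)\nu,\nu\rangle$, which makes the $H'$ computation produce the clean error $O(M+\delta/r)$ rather than your $\omega(r)H^{1/2}(D')^{1/2}$ involving the sphere integral $D'$.
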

See Definition \ref{defn:almgren} for the expression $N_u^A(1)$. In view of the examples of \cite{Mandache-ceg} and Theorem \ref{thm:main_anis}, the threshold for unique continuation is more accurately described as $A$ having \textit{Osgood} modulus of continuity $\omega$. Indeed in practice the conditions \eqref{cond:semi}, \eqref{cond:int} are likely satisfied by many moduli of continuity $\omega$ which satisfy \eqref{cond:osgood}, and Theorem \ref{thm:main_anis} applies to, for example $\omega(t) = t \log(1/t)$. Along with \cite{Mandache-ceg}, we then infer that if $A$ has modulus of continuity given by $\omega(t) = t \log(1/t)^p$, then unique continuation for $\mathcal{L}$ holds for $p =1$ but may fail for any choice of $p >1$. A natural context in which such a matrix $A$ appears is one where $\nabla A$ has mild singularities, e.g., if $\abs{\nabla A(x)} \le \abs{\log(\dist(x, \Gamma))}$ where $\Gamma$ is a smooth $(n-1)$ manifold. 

Finally, it is worth noting that since our methods give uniform bounds on the frequency function $N_u^A(r)$, then we can also obtain uniform doubling properties of solutions and apply the recent results of \cite{HW23-holder} to obtain the following:
\begin{thm}\label{thm:main_crit}
	Let $u \not \equiv 0$ solve
	\begin{align*}
		-\divv(A \nabla u) = 0 \text{ in } B_1 \subset \R^n,
	\end{align*}
	where $A$ is H\"{o}lder continuous of order $\alpha \in (0,1)$ and $A$ satisfies the hypothesis of Theorem \ref{thm:main_anis}. Then the critical set $\mathcal{C}(u) \coloneqq \{ x \in B_1 \, : \, \nabla u(x) = 0\}$ is $(n-2)$-rectifiable and has locally finite $(n-2)$-Minkowski content $B_1$.  
\end{thm}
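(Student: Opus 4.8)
The plan is to reduce the statement to the structural results of \cite{HW23-holder} on critical sets of solutions of divergence-form elliptic equations with H\"older coefficients. Those results assert that if $-\divv(A\nabla u)=0$ with $A$ $\alpha$-H\"older continuous and the doubling index of $u$ (equivalently, an Almgren-type frequency) is bounded by some $\Lambda$ on a fixed ball together with all of its subballs, then $\mathcal{C}(u)$ restricted to that ball is $(n-2)$-rectifiable and has $(n-2)$-Minkowski content controlled in terms of $\Lambda$, $n$, $\lambda$, $\alpha$, and the H\"older constant of $A$. Thus the entire task is to extract, from the hypotheses of Theorem \ref{thm:main_anis} or Theorem \ref{thm:main_iso}, a \emph{locally uniform} frequency bound for $u$.

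First I would revisit the proofs of Theorems \ref{thm:main_anis} and \ref{thm:main_iso}. In both cases the vanishing-order bound is produced via an almost-monotonicity inequality for the (possibly weighted) Almgren frequency $N_u^A(r)$ of Definition \ref{defn:almgren}, and this inequality yields not merely a bound on $\limsup_{r\to 0}N_u^A(r)$ but a bound of the form $N_u^A(r)\le F(N_u^A(r_0))$ valid for \emph{all} $0<r\le r_0$, with $r_0$ and the increasing function $F$ depending only on $n$, $\lambda$, $\omega$ (in the setting of Theorem \ref{thm:main_anis}) or on $n$, $\lambda$, $\cholder$, $\alpha$ (in the setting of Theorem \ref{thm:main_iso}). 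I would record this uniform-in-$r$ statement explicitly, since the monotonicity machinery already gives it for free. From it, the doubling inequality $\fint_{B_{2r}(y)}u^2\,dx\le C_D\fint_{B_r(y)}u^2\,dx$ follows by integrating the logarithmic relation between the frequency and the height $H(r)=\fint_{\partial B_r(y)}u^2\,d\sigma$, with $C_D$ depending only on $F(N_u^A(r_0))$ and the ellipticity data.

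Next, to obtain the bound at an arbitrary center $y\in B_1$ I would use the normalization already built into the statements: after the linear change of variables conjugating by $A(y)^{-1/2}$ — which preserves H\"older (resp. Osgood, via \eqref{cond:semi}) moduli and ellipticity up to fixed constants — one may assume $A(y)=I$, so the monotonicity applies to $u$ recentered at $y$. Together with the strong unique continuation principle already established (which guarantees $u$ vanishes to at most finite order at every point, hence has finite frequency everywhere in $B_1$), a standard compactness and covering argument produces, for each compact $K\subset B_1$, a radius $\rho_K>0$ and a constant $\Lambda_K$ with $N_u^A$-frequency of $u$ on $B_\rho(y)$ at most $\Lambda_K$ for all $y\in K$ and $\rho\le\rho_K$. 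Applying \cite{HW23-holder} on each such ball $B_{\rho_K}(y)$ gives that $\mathcal{C}(u)\cap B_{\rho_K/2}(y)$ is $(n-2)$-rectifiable with $(n-2)$-Minkowski content bounded in terms of $\Lambda_K$ and the structural data; covering $K$ by finitely many such balls yields the conclusion on $K$, and letting $K\uparrow B_1$ gives the claimed rectifiability and locally finite $(n-2)$-Minkowski content in $B_1$.

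The main obstacle is the passage from the single-point statements of Theorems \ref{thm:main_anis}--\ref{thm:main_iso} to the locally uniform frequency bound: one must verify that the frequency almost-monotonicity is genuinely stable under translating the center and passing to subballs, and that strong unique continuation upgrades pointwise finiteness of the frequency to a bound uniform over a compact set. In the Osgood setting there is an additional technical point, namely checking that the weight appearing in the modified frequency does not degrade the doubling constant $C_D$ (so that it depends only on the frequency bound and on $n,\lambda,\omega$), which should follow from conditions \eqref{cond:int}--\eqref{cond:semi} on $\phi$.
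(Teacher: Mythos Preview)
Your proposal is correct and follows exactly the approach the paper indicates: the paper does not give a separate proof of Theorem \ref{thm:main_crit}, but states in the paragraph preceding it that the uniform frequency bounds of Theorems \ref{thm:freq_bdd} and \ref{thm:iso_main2} yield uniform doubling, after which one invokes \cite{HW23-holder}. Your write-up simply makes explicit the recentering via $A(y)^{-1/2}$ and the compactness step needed to pass from the single-center bounds to a locally uniform one, which the paper leaves implicit.
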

The point of Theorem \ref{thm:main_crit} is that the solution $u$ is not assumed apriori to be doubling, such as in the main results of \cite{HW23-holder}. Theorem \ref{thm:main_crit} can be made more quantitative (using the more quantitative conclusion of Theorems \ref{thm:freq_bdd}), but we do not expect the estimates to be optimal. It is likely that an analogue Theorem \ref{thm:main_crit} holds for the nodal set $\mathcal{N}(u) \coloneqq \{ x \in B_1 \; : \; u(x) = 0\}$ of solutions as well (replacing the dimension $(n-2)$ with $(n-1)$), but since \cite{HW23-holder} do not study the size of the nodal set, we content ourselves just with stating the application to the size of critical sets.

\subsection{Discussion of the main results and outline of the paper}

Theorem \ref{thm:main_anis} is new in that it shows unique continuation in a class of operators without requiring estimates (or structure) on $\nabla A$. Indeed it is worth pointing out that unique continuation results have been shown for Lipschitz coefficients $A$ with jump discontinuities across a hypersurface \cite{LRL13, DCFLVW17, FVW22}, or degenerate-singular elliptic equations when $\nabla A$ has some homogeneous structure \cite{CS07, GSVG14, Yu17, SST20, EJSpreprint}. 

It remains an interesting open questions exactly how additional structure on the matrix $A$ could yield results better than Theorem \ref{thm:main_anis}. For example, in the isotropic setting ($A = aI$), if $a \in W^{1,p}(B_1)$ for $p > n$, then strong unique continuation holds as a consequence of the results in \cite{Wolff92, KT01}, since $u$ solves $-\divv (a \nabla u) = 0$ if and only if 
\begin{align}\label{eqn:iso_reform}
	- \Delta u & = \nabla (\log(a)) \cdot \nabla u.
\end{align}
In particular, for such operators, we know that Theorem \ref{thm:main_anis} is not sharp. On the other hand, there are no known counterexamples for strong unique continuation in the class of \textit{isotropic}, H\"{o}lder continuous coefficients $A = aI$, and so a large gap remains open. Better understanding the role of such structural assumptions could prove useful in the Calder\'{o}n problem for isotropic conductivities \cite{Haberman15, CR16}.

\vspace{0.5em}

The idea of the proof of \ref{thm:main_anis}, is to use the regularity of the coefficients on $A$ to control the size of the Almgren frequency function associated to a solution, $u$, from the scale $r$ to the scale $r(1-1/N)$, where $N = N_u^A(r)$ is the frequency of $u$ at the scale $r$. After repeating this many times, we show that in fact $N_u^A(r)$ is bounded for all sufficiently small $r$ which readily implies the bound on the vanishing order.

When $A$ has an Osgood modulus of continuity, we can directly smooth the coefficients $A$ and pass estimates on a solution $v$ to an equation with smooth coefficients directly to the solution $u$. The key estimate in this section is Lemma \ref{lem:dichot}, which is in some sense an integrated version of almost-monotonicity of $N_u^A(r)$. The Osgood condition is well-suited to this context, and guarantees slow enough growth from $r$ to $r(1-1/N)$ to ensure that $N_u^a(r)$ does not blow up at a finite radius $r'$; see Lemma \ref{lem:growth_smooth}.

\subsection{Conventions and organization of the paper}

The entirety of this paper is valid in $\R^n$ for $n \ge 2$. As is usual in analysis papers, we denote by $C > 1$ and $ 0 < c < 1$ large and small constants which may change from line to line, and only depend on the dimension $n$ unless otherwise specified. We use the symbol $A \lesssim_D B$ to mean that there is some constant $C > 1$ depending on $n$ and $D$ for which $A \le C B$, and similarly, $A \simeq_D B$ to mean that $A \lesssim_D B \lesssim_D A$.  Since it shall prove convenient in the proof of Lemma \ref{lem:almost_monoton}, we use the big O notation $O(A)$ to denote some real number whose absolute value is bounded by $CA$. More specifically, $A = B + O(D)$ means that $\abs{A - B} \lesssim D$. Finally, we sometimes use the phrase ``for $\epsilon$ sufficiently small'' to mean that some statement is true for all $0 < \epsilon < C^{-1}$ where $C >1 $ is a constant depending just on $n$. We also adopt the usual notation that a matrix $A$ is uniformly elliptic with constant $\lambda  >0$ provided that 
\begin{align*}
	\lambda \abs{\xi}^2 \le A \xi \cdot \xi, \qquad \abs{A \xi} \le \lambda^{-1} \abs{\xi}, \, \, \xi \in \R^n. 
\end{align*}
Moreover, we write $\abs{E}$ for the Lebesgue measure of a set $E \subset \R^n$, denote by $\sigma$ surface measure on $\partial B_r$, and adopt the average-integral notation
\begin{align*}
	\fint_E f \, d\mu = \dfrac{1}{\mu(E)} \int f \, d\mu.
\end{align*}
Often we will omit $dx$ or $d\sigma$ (Lebesgue and surface measure) from integral expressions when the domain of integration is $B_r$ or $\partial B_r$ respectively. 

In Section \ref{sec:anisotropic} we provide a proof of Theorem \ref{thm:main_anis}, and in the Appendix we list a simple proof quantitative stability for solutions to elliptic PDEs.

\stoptoc
\section*{Acknowledgements} 
C.J. was supported by the European Research Council (ERC) under the European Union's Horizon 2020 research and innovation programme (grant agreement 101018680), and by the National Science Foundation through NSF-DMS-2503326. He thanks S. Decio, M. Engelstein, S. Mayboroda, and E. Malinnikova for helpful conversations regarding the main result. He would also like to thank M. Engelstein for providing helpful comments on the draft of this paper, and S. Decio for explaining to him the results in \cite{MS69}.
\resumetoc

\section{Anisotropic equations}\label{sec:anisotropic}

In this section, we prove the main results, Theorem \ref{thm:main_anis}. The main tools in the proof consist of elementary boundedness results for functions satisfying a differential inequality (Lemma \ref{lem:growth_smooth}) and almost-monotonicity of the Almgren frequency function when $A$ is Lipschitz (Lemma \ref{lem:almost_monoton}). Through a careful approximation procedure, we combine them to arrive at the main result, which is Theorem \ref{thm:freq_bdd}.

First, let us introduce some notation and basic properties related to moduli of continuity.

\begin{defn}
	We say that $\omega:[0, 1] \ra [0, \infty)$ is a modulus of continuity if $\omega$ is a continuous, non-decreasing, concave function, and $\omega(0) = 0$. We further say that $\omega$ satisfies the \textbf{Osgood condition} provided that 
	\begin{align}\label{eqn:osgood}
		\int_0^1 \dfrac{dt}{\omega(t)} = \infty.
	\end{align}
\end{defn}

Associated to the modulus of continuity $\omega$, we introduce the function $\phi(s) = \omega(s)/s$ which has the following properties.
\begin{prop}\label{prop:mod}
Let $\omega:[0,1] \ra [0, \infty)$ be a modulus of continuity, and write $\omega(s) = s\phi(s)$, $\psi(s) = \phi(1/s)$. Then $\phi$ is decreasing, $\psi$ is increasing, and $1/(s\psi(s))$ is decreasing. 
\end{prop}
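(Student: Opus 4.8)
The plan is to deduce all three monotonicity statements from the single classical fact that a concave function vanishing at the origin has non-increasing secant slopes. First I would record that for $0 < s_1 \le s_2 \le 1$ one may write $s_1 = \theta s_2 + (1-\theta)\cdot 0$ with $\theta = s_1/s_2 \in (0,1]$, so concavity of $\omega$ together with $\omega(0)=0$ gives $\omega(s_1) \ge \theta\,\omega(s_2)+(1-\theta)\omega(0) = (s_1/s_2)\,\omega(s_2)$. Dividing by $s_1$ yields $\phi(s_1) = \omega(s_1)/s_1 \ge \omega(s_2)/s_2 = \phi(s_2)$, which is the first claim: $\phi$ is (weakly) decreasing on $(0,1]$. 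I would note in passing that the only degenerate case is $\omega \equiv 0$, where all three statements are trivial; otherwise concavity and monotonicity force $\omega > 0$ on $(0,1]$, so the reciprocals appearing below are well defined, and the relevant domains are $s \in (0,1]$ for $\phi$ and $s \in [1,\infty)$ for $\psi$ and for $1/(s\psi(s))$.

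For the second claim I would simply observe that $\psi = \phi \circ \iota$, where $\iota(s) = 1/s$ maps $[1,\infty)$ decreasingly onto $(0,1]$; since the composition of two decreasing maps is increasing, $\psi(s) = \phi(1/s)$ is increasing on $[1,\infty)$.

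For the third claim the idea is to substitute $t = 1/s \in (0,1]$. Then $s\,\psi(s) = s\,\phi(1/s) = \phi(t)/t = \omega(t)/t^2$, so $\dfrac{1}{s\psi(s)} = \dfrac{t^2}{\omega(t)}$. Because $s \mapsto t = 1/s$ is a decreasing bijection of $[1,\infty)$ onto $(0,1]$, showing that $1/(s\psi(s))$ is decreasing in $s$ is the same as showing that $t \mapsto t^2/\omega(t)$ is increasing on $(0,1]$, equivalently that $t \mapsto \omega(t)/t^2 = \phi(t)\cdot t^{-1}$ is decreasing there. But $\phi$ is non-negative and decreasing by the first claim, $t \mapsto t^{-1}$ is non-negative and decreasing, and the product of two non-negative decreasing functions is again decreasing; this gives the third claim.

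I do not anticipate a genuine obstacle: the entire content of the proposition is the secant-slope lemma for concave functions through the origin, and everything else is bookkeeping with the substitution $t = 1/s$ together with the elementary facts that compositions and products of monotone (non-negative) functions behave in the expected way. The only points requiring a little care are the trivial case $\omega \equiv 0$ and spelling out the precise intervals on which $\phi$, $\psi$, and $1/(s\psi(s))$ live, which I would dispatch in a single sentence at the start of the proof.
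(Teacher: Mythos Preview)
Your proof is correct. The paper states Proposition~\ref{prop:mod} without proof, treating it as an elementary consequence of the definition of a modulus of continuity; your argument via the secant-slope inequality for concave functions through the origin is exactly the intended justification, and the bookkeeping with the substitution $t=1/s$ is clean and complete.
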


From the previous proposition, we know that $\lim_{s \ra 0^+} \phi(s)$ exists. Since the case when $\lim_{s \ra 0^+} \phi(s) < \infty$ corresponds to Lipschitz functions (which, from the work of \cite{GL86} is well-understood), we may as well restrict our attention to non-Lipschitz modululi $\omega$, i..e, the case when
\begin{align*}
	\lim_{s \ra 0^+} \phi(s) = + \infty.
\end{align*}
First, let us make the following elementary observations, which will be the main tool in the proof of Theorem \ref{thm:freq_bdd}.
\begin{lemma}\label{lem:growth_smooth}
	Let $\omega$ be an Osgood modulus of continuity, and let $f:[0,1] \ra [0, \infty]$ be a decreasing function (locally Lipschitz whenever it is finite) satisfying the differential inequality 
	\begin{align}\label{eqn:diff_ineq}
		f'(t) \ge - C_1 f(t) \psi(f(t)) g(t), \, \, \text{a.e. } t \in (0,1], \, \, f(t) < \infty,
	\end{align}
	for some nonnegative $g \in L^\infty_{\loc}(0,1)$ and some constant $C_1 > 0$. If $f(1) < \infty$, then $f$ is bounded on compact subsets of $(0, 1]$. If in addition $\int_0^1 g(t) \, dt < \infty$, then $f$ is bounded on $[0,1]$. 
\end{lemma}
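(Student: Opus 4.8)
The plan is to reduce the differential inequality \eqref{eqn:diff_ineq} to a Gronwall-type comparison. Note first that $\psi(s)=\phi(1/s)=s\,\omega(1/s)$ is only defined for $s\ge 1$, so we may take $f\ge 1$; in particular $u\psi(u)=u^2\,\omega(1/u)>0$ for $u\ge 1$, and I would introduce the strictly increasing $C^1$ function
\[
	\Phi(x)\;=\;\int_1^x\frac{du}{u\,\psi(u)}\;=\;\int_1^x\frac{du}{u^2\,\omega(1/u)},\qquad x\ge 1 .
\]
The key point — and the only place the Osgood condition is used — is that $\Phi(\infty)=+\infty$: the substitution $t=1/u$ gives $\Phi(\infty)=\int_0^1\frac{dt}{\omega(t)}=\infty$ by \eqref{eqn:osgood}. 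Hence $\Phi$ is an increasing bijection of $[1,\infty)$ onto $[0,\infty)$, with a well-defined increasing inverse $\Phi^{-1}\colon[0,\infty)\to[1,\infty)$.

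Next I would integrate. By monotonicity of $f$, the set where $f$ is finite is an interval of the form $(a,1]$ (or all of $[0,1]$), where $a:=\inf\{t:f(t)<\infty\}$, and $f$ blows up at $a$ when $a>0$. On $(a,1]$ the function $f$ is locally Lipschitz and bounded on every compact subinterval $[t,1]$, so $s\mapsto\Phi(f(s))$ is absolutely continuous there (composition of a Lipschitz function with $\Phi\in C^1$ on the bounded range $f([t,1])$), and dividing \eqref{eqn:diff_ineq} by $f(s)\psi(f(s))>0$ gives
\[
	\frac{d}{ds}\Phi(f(s))\;=\;\frac{f'(s)}{f(s)\,\psi(f(s))}\;\ge\;-\,C_1\,g(s)\qquad\text{a.e. }s\in(a,1] .
\]
Integrating from $t$ to $1$ and writing $G(t):=\int_t^1 g(s)\,ds$, I obtain for every $t\in(a,1]$
\[
	\Phi(f(t))\;\le\;\Phi(f(1))+C_1\,G(t),\qquad\text{i.e.}\qquad f(t)\;\le\;\Phi^{-1}\bigl(\Phi(f(1))+C_1\,G(t)\bigr).
\]

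With this estimate in hand the conclusions follow quickly. If $a>0$, then $G(t)\le G(a)=\int_a^1 g<\infty$ for $t\in(a,1]$ (here $g\in L^\infty_{\loc}$), so $f$ is bounded on $(a,1]$ by the finite constant $\Phi^{-1}\bigl(\Phi(f(1))+C_1 G(a)\bigr)$, contradicting the blow-up of $f$ at $a$; hence $a=0$ and the displayed bound holds on all of $(0,1]$. For a compact $K\subset(0,1]$, setting $k=\min K>0$ gives $f\le\Phi^{-1}\bigl(\Phi(f(1))+C_1 G(k)\bigr)<\infty$ on $K$, which is the first assertion. If in addition $\int_0^1 g<\infty$, then $G(t)\le\int_0^1 g<\infty$ for every $t\in(0,1]$, whence $f\le\Phi^{-1}\bigl(\Phi(f(1))+C_1\int_0^1 g\bigr)$ on $(0,1]$ and therefore on $[0,1]$ by letting $t\to0^+$, which is the second assertion.

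I expect the main obstacle to be conceptual rather than computational: the essential step is recognizing the correct comparison function $\Phi(x)=\int^x du/(u\psi(u))$ and seeing that its divergence at infinity is \emph{exactly} the Osgood condition after the change of variables $t=1/u$; once this is identified, the rest is a routine integration of the differential inequality. The only other point requiring care is that $f$ is permitted to take the value $+\infty$, so one must exclude blow-up of $f$ at a positive value of $t$ — and this is precisely what $\Phi(\infty)=\infty$ prevents.
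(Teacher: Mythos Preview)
Your proof is correct and follows essentially the same approach as the paper: your comparison function $\Phi(x)=\int_1^x du/(u\psi(u))$ is exactly the paper's $h$, and both arguments divide the differential inequality by $f\psi(f)$, integrate, and use that the Osgood condition is equivalent to $\Phi(\infty)=\infty$ via the substitution $t=1/u$. Your write-up is somewhat more careful than the paper's about the absolute continuity of $\Phi\circ f$ and about explicitly ruling out finite-time blow-up, but the underlying argument is identical.
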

\begin{proof}
	Recall the notation $\omega(s) = s \phi(s)$, $\psi(s) = \phi(1/s)$. Let $h(t) = \int_1^t 1/(s\psi(s)) \, ds$, so that $h'(t) = 1/(t\psi(t))$, and $h(t)$ is an increasing function. By assumption, we have 
	\begin{align*}
		\dfrac{d}{dt} h(f(t)) \ge - C_1 g(t),
	\end{align*}
	so that 
	\begin{align*}
		h(f(t))  \le h(f(1)) + C_1 \int_{t}^1 g(s) \, ds, \, \, t \in (0, 1).
	\end{align*}
	Since $g \in L^\infty_{\loc}(0,1)$, we see that $h\circ f$ is bounded on compact subsets of $(0,1]$. Recalling that $\omega$ satisfies the Osgood condition \eqref{eqn:osgood}, we know that 
	\begin{align*}
	\infty = 	\int_0^1 \dfrac{ds}{\omega (s)} = \int_1^\infty \dfrac{ds}{s\psi(s)},
	\end{align*}
	and thus $f(t)$ is bounded on compact subsets of $(0,1]$ as well. In addition this also shows that if $g \in L^1(0,1)$, then $h \circ f$ (and thus $f$) is bounded on $[0,1]$, which completes the proof.
\end{proof}

It is a simple Corollary of Lemma \ref{lem:growth_smooth} that if one instead assumes an integrated version of the differential inequality \eqref{eqn:diff_ineq}, then the same conclusion holds. The proof consists in simply replacing the function $f$ by its linear interpolant in the scales $[t_k(1-1/f(t_k)), t_k]$ for an appropriately chosen sequence $t_k \da 0$ and applying the Lemma above.

 \begin{cor}\label{cor:growth_disc}
 	Let $\omega$ be an Osgood modulus of continuity, and let $f:[0,1] \ra [0, \infty]$ be a decreasing function satisfying
 	\begin{align}\label{eqn:diff_ineq_disc}
 		f \left(  t \left( 1 - 1/f(t)  \right)   \right) \le f(t) + C_1 t \psi(f(t)) g(t), \, \, t \in (0,1), \, \, f (t) < \infty,
 	\end{align}
 	for some nonnegative $g \in L^\infty_{\loc}(0,1)$ and some constant $C_1 > 0$. If $g$ is doubling in the sense that 
 	\begin{align*}
 		g(s) \le C_1 g(\gamma s), \, \, \gamma \in (1/2, 1),
 	\end{align*}
 	and $f(1) < \infty$, then $f$ is bounded on compact subsets of $(0, 1]$. If in addition $\int_0^1 g(t) \, dt < \infty$, then $f$ is bounded on $[0,1]$. 
 \end{cor}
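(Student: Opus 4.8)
The plan is to manufacture from $f$ a genuinely locally Lipschitz, decreasing function $\tilde f$ that satisfies the differential inequality \eqref{eqn:diff_ineq} of Lemma \ref{lem:growth_smooth} and agrees with $f$ along a sequence of scales decreasing to $0$; Lemma \ref{lem:growth_smooth} then applies to $\tilde f$, and the conclusion transfers back to $f$ by monotonicity. We may assume $f$ is unbounded (otherwise there is nothing to prove) and finite on $(0,1)$ (otherwise \eqref{eqn:diff_ineq_disc} imposes no constraint on the set where $f=\infty$), so $f(t)\to\infty$ as $t\to 0^+$. I would pick $t_0\in(0,1)$ with $f>2$ on $(0,t_0]$ and define recursively $t_{k+1}=t_k(1-1/f(t_k))$; by \eqref{eqn:diff_ineq_disc} and induction $f(t_k)<\infty$ for all $k$, the sequence $\{t_k\}$ is strictly decreasing, $\{f(t_k)\}$ is non-decreasing, and $t_{k+1}/t_k=1-1/f(t_k)\in[1/2,1)$.

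The first substantive step is to show $t_k\to 0$, which is the discrete analogue of the Osgood computation in the proof of Lemma \ref{lem:growth_smooth}. If instead $t_k\to\ell>0$, then $\prod_k(1-1/f(t_k))=\ell/t_0>0$ forces $\sum_k 1/f(t_k)<\infty$ and hence $f(t_k)\to\infty$. Feeding \eqref{eqn:diff_ineq_disc} into the increasing function $h(s)=\int_1^s \frac{dr}{r\psi(r)}$ and using that $r\mapsto 1/(r\psi(r))$ is decreasing (Proposition \ref{prop:mod}) gives $h(f(t_{k+1}))-h(f(t_k))\le \frac{C_1 t_k\psi(f(t_k))g(t_k)}{f(t_k)\psi(f(t_k))}\lesssim g(t_k)/f(t_k)$; since $g$ is bounded on the compact set $\{t_k\}\cup\{\ell\}\subset(0,1)$ and $\sum 1/f(t_k)<\infty$, the quantities $h(f(t_k))$ stay bounded, so by the Osgood condition ($h\to\infty$) the $f(t_k)$ are bounded, forcing $t_k\le t_0(1-1/\sup_k f(t_k))^k\to 0$ — a contradiction.

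Next I would introduce the interpolant: let $\tilde f$ on $(0,t_0]$ be the continuous piecewise-linear function with $\tilde f(t_k)=f(t_k)$, which is decreasing, locally Lipschitz, and finite at $t_0$. On each $(t_{k+1},t_k)$, since $t_k-t_{k+1}=t_k/f(t_k)$, we get $-\tilde f'(t)=\frac{f(t_k)}{t_k}\big(f(t_{k+1})-f(t_k)\big)\le C_1 f(t_k)\psi(f(t_k))g(t_k)$ by \eqref{eqn:diff_ineq_disc}. Because $\tilde f$ is decreasing and $\psi$ increasing, $f(t_k)=\tilde f(t_k)\le\tilde f(t)$ and $\psi(f(t_k))\le\psi(\tilde f(t))$ for $t\le t_k$; and for a.e.\ $t\in(t_{k+1},t_k)$ one may write $t=\gamma t_k$ with $\gamma=t/t_k\in(1/2,1)$, so the doubling hypothesis gives $g(t_k)\le C_1 g(t)$. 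Combining, $-\tilde f'(t)\le C_1^2\,\tilde f(t)\,\psi(\tilde f(t))\,g(t)$ a.e.\ on $(0,t_0)$, i.e.\ $\tilde f$ satisfies \eqref{eqn:diff_ineq} with $C_1^2$ in place of $C_1$ and the same weight $g\in L^\infty_{\loc}(0,1)$. Rescaling $(0,t_0]$ to $(0,1]$ and invoking Lemma \ref{lem:growth_smooth}, $\tilde f$ is bounded on compact subsets of $(0,t_0]$, and on $[0,t_0]$ when $\int_0^1 g<\infty$.

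Finally I would transfer the bound back to $f$: since $f$ is decreasing, $f(t)\le f(t_{k+1})=\tilde f(t_{k+1})$ for $t\in(t_{k+1},t_k]$, and because $t_k\to 0$ these intervals exhaust $(0,t_0]$ while every compact $K\subset(0,t_0]$ meets only finitely many of them; thus $f$ inherits boundedness of $\tilde f$ on compact subsets of $(0,t_0]$ (and on $[0,t_0]$ in the integrable case, which also bounds $f(0)=\lim_{t\to 0^+}f(t)$), while on $[t_0,1]$ monotonicity gives $f\le f(t_0)<\infty$. I expect the two places requiring genuine care to be the discrete Osgood argument of the second step and, in the third step, checking that the piecewise-linear surgery reproduces \emph{exactly} the form of \eqref{eqn:diff_ineq}: it is precisely the doubling of $g$ that allows the node value $g(t_k)$ to be replaced by a multiple of $g(t)$ throughout $(t_{k+1},t_k)$, and this is the only hypothesis of the corollary not already present in Lemma \ref{lem:growth_smooth}; everything else is bookkeeping with the monotonicity of $f$, $\psi$, and $h$.
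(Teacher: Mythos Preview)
Your proof is correct and follows essentially the same approach as the paper: define $t_{k+1}=t_k(1-1/f(t_k))$, linearly interpolate $f$ at the nodes $t_k$, verify the differential inequality \eqref{eqn:diff_ineq} for the interpolant using the doubling of $g$, and invoke Lemma~\ref{lem:growth_smooth}. You add an explicit discrete Osgood argument showing $t_k\to 0$ (which the paper leaves implicit), though your opening reduction ``WLOG $f$ is finite on $(0,1)$'' is not truly a WLOG---it is in fact a \emph{consequence} of that very $t_k\to 0$ argument, so the exposition would be cleaner if you simply started from any $t_0$ with $f(t_0)\in[2,\infty)$ (which exists since $f(1)<\infty$ and $f$ is decreasing and unbounded) and let the rest follow.
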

\begin{proof}
	 Let $N_0 \coloneqq f(1) < \infty$, $t_0 \coloneqq 1$, and define inductively the sequences $t_{k+1} \coloneqq t_k(1 - 1/ N_k)$, $N_{k+1} \coloneqq f(t_{k+1}) \le N_k + C_1 t_k \psi(N_k) g(t_k)$. Notice that we may assume $N_0 \ge 2$, otherwise we simply choose $t_0$ slightly smaller. Consider the function $h$, which on each interval $I_k \equiv [t_{k+1}, t_k]$ interpolates linearly between $N_{k+1}$ and $N_k$:
	\begin{align*}
		h(t) \equiv N_{k+1} + \dfrac{N_k - N_{k+1}}{t_k - t_{k+1}} (t - t_{k+1}), \, \, t \in I_k.
	\end{align*}
	Since $f$ is decreasing, we see that $h$ also is, and that $h(t) \ge N_k$ for $t \in I_k$. Moreover, we see that 
	\begin{align*}
		h'(t) = (N_k/t_k) (N_k - N_{k+1}) \ge - C_1 N_k \psi(N_k) g(t_k), \, \, t \in I_k.
	\end{align*}
	Using that $\psi$ is increasing and the doubling condition on $g$, the above implies readily that 
	\begin{align*}
		h'(t) \ge -C h(t) \psi(h(t)) g(t), \, \, t \in I_k
	\end{align*}
	and thus we may apply Lemma \ref{lem:growth_smooth} to the function $h$. Since $h(t_k) = f(t_k)$ and $f$ is decreasing, the boundedness properties of $h$ from that Lemma transfer directly to $f$, completing the proof of the Lemma. 
\end{proof}

	Our goal is to apply Corollary \ref{cor:growth_disc} to a suitable variant of the Almgren frequency function for a solution $u$ of the uniformly elliptic equation
	\begin{align}
		-\divv(A \nabla u) & = 0  \text{ in } B_2 \subset \R^n, \label{eqn:soln}
	\end{align}
	where $A$ has an Osgood modulus of continuity $\omega$. Unique continuation principles for solutions will then follow quickly from boundedness of this frequency function. The frequency $N_u^A(r)$, which we define now, measures in some sense the dominant homogeneity of $u$ inside the ball $B_r$.
	\begin{defn}\label{defn:almgren}
		If $u$ solves \eqref{eqn:soln} in $B_{R}$, we define the Almgren frequency
		\begin{align}
			N_{u}^A(r) \coloneqq \dfrac{r   \int_{B_r} \langle A \nabla u, \nabla u \rangle  \, dx }{\int_{\partial B_r} u^2 \mu  \, d\sigma } \equiv \dfrac{r D_u^A(r)}{H_u^A(r)}, \, \, 0 < r  < R,
		\end{align}
		where $\mu(x) \coloneqq \langle A(x) x/\abs{x}, x/\abs{x}\rangle$.
	\end{defn}

	When the matrix $A$ is Lipschitz, the Almgren frequency function was shown to be almost-monotone in \cite{GL86}. Since we need slightly more precise estimates, we provide a detailed proof in the following Lemma. In the end these estimates will be applied to solutions of $\divv( A_\epsilon \nabla v) = 0$ where $A_\epsilon = A * \eta_\epsilon$, $\eta$ is an approximate identity, and $\epsilon$ is a small parameter depending on the frequency of the solution $u$ at scale $r$. 
	\begin{lemma}\label{lem:almost_monoton}
		Let $u$ be a solution of \eqref{eqn:soln} in $B_R$, and suppose that $A$ is symmetric and uniformly elliptic with constant $\lambda >0$, and $A$ satisfies the smoothness estimates 
		\begin{align*}
			\abs{\nabla A(x)} \le M, \qquad \abs{A(x) - I } \le \delta, \, \, x \in B_R, 
		\end{align*}
		for some $ M, \delta >0$. Then we have the estimate 
		\begin{align*}
			\dfrac{d}{dr}  \log \left(   N_u^A(r) \right) \ge - C_1 ( M + \delta/r), \, \, r \in (0,R),
		\end{align*}
		for some constant $C_1 >0$ depending just on the dimension $n$ and $\lambda$. In addition, we have the estimate
		\begin{align*}
			\dfrac{d}{dr} \log( r^{-n+1} H_u^A(r)) & =  2 N_u^A(r)/r + e(r), \, \, r \in (0, R),
		\end{align*}
		with error $\abs{e(r)} \le C_1 ( M + \delta/r)$. 
	\end{lemma}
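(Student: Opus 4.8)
The plan is to carry out the classical Almgren differentiation argument, keeping careful track of the error terms coming from $\abs{\nabla A} \le M$ and $\abs{A-I}\le \delta$. First I would record the two basic Rellich-type identities. Differentiating $H_u^A(r) = \int_{\partial B_r} u^2 \mu\, d\sigma$ in $r$, after writing the integral over $\partial B_r$ as a surface integral and using the divergence structure, one obtains
\begin{align*}
	\frac{d}{dr} H_u^A(r) = \frac{n-1}{r} H_u^A(r) + 2 D_u^A(r) + \int_{\partial B_r} u^2 \,\partial_r \mu \, d\sigma + (\text{terms from } \partial_r(d\sigma)),
\end{align*}
and the key point is that $\abs{\partial_r \mu} \lesssim M$ pointwise (since $\mu(x) = \langle A(x)\hat x,\hat x\rangle$ is a Lipschitz function of $x$ with constant $\lesssim M$ on the unit sphere's worth of directions, using $\abs{\nabla A}\le M$ and $\lambda \le \mu \le \lambda^{-1}$). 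This immediately gives $\frac{d}{dr}\log(r^{-n+1}H_u^A(r)) = 2 D_u^A(r)/H_u^A(r) + e(r) = 2 N_u^A(r)/r + e(r)$ with $\abs{e(r)} \lesssim M$; the $\delta/r$ contribution to $e(r)$ is actually not needed here but is harmless, so I would state the bound as $\abs{e(r)}\le C_1(M+\delta/r)$ to match the second displayed estimate uniformly. This is the easy half.

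Next, for $D_u^A(r) = \int_{B_r}\langle A\nabla u,\nabla u\rangle\,dx$, I would use the equation $-\divv(A\nabla u)=0$ together with the Rellich–Nečas (Pohozaev) identity applied in $B_r$: testing the equation against $x\cdot\nabla u$ and integrating by parts yields
\begin{align*}
	\int_{\partial B_r} r\,\langle A\nabla u, \hat x\rangle^2 \frac{1}{\mu}\,d\sigma \;\text{-type boundary term} = \left(\frac{n-2}{2}\right)\!\int_{B_r}\!\langle A\nabla u,\nabla u\rangle\,dx + \frac{r}{2}\!\int_{\partial B_r}\!\langle A\nabla u,\nabla u\rangle\,d\sigma + E,
\end{align*}
where the ``bulk error'' $E$ collects the terms $\int_{B_r} (x\cdot\nabla)A\,\nabla u\cdot\nabla u\,dx$, which is controlled by $M\int_{B_r}\abs{\nabla u}^2 \lesssim M\,r\,D_u^A(r)$ after relating $\int_{B_r}\abs{\nabla u}^2$ to $D_u^A(r)$ via ellipticity (costing a $\lambda^{-1}$) — and here a factor $r$ appears because $\abs{x}\le r$ on $B_r$. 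Also, because $A(0)=I$ and $\abs{A-I}\le\delta$, the natural almost-orthogonality $\langle A\nabla u,\hat x\rangle \approx \mu\,\partial_\nu u$ holds up to a relative error $O(\delta)$, and one must also compare $\int_{B_r}\langle A\nabla u,\nabla u\rangle$ against $D_u^A(r)$ and the flux against $H_u^A(r)\cdot(\text{frequency})$ — each such replacement introduces a relative error of size $O(M r + \delta)$. Integrating by parts once more ($D_u^A(r) = \int_{\partial B_r}\langle A\nabla u,\hat x\rangle u\, d\sigma + O(\delta)\|\cdots\|$ using the equation) gives the identity $\frac{d}{dr}D_u^A(r) = \frac{n-2}{r}D_u^A(r) + 2\int_{\partial B_r}\langle A\nabla u,\hat x\rangle^2\mu^{-1}\,d\sigma + O(M)\,D_u^A(r) + O(\delta/r)\,D_u^A(r)$.

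Finally I would assemble the logarithmic derivative:
\begin{align*}
	\frac{d}{dr}\log N_u^A(r) = \frac{1}{r} + \frac{(D_u^A)'(r)}{D_u^A(r)} - \frac{(H_u^A)'(r)}{H_u^A(r)}.
\end{align*}
Plugging in the two identities, the main terms combine into the Cauchy–Schwarz ``variance'' expression
\begin{align*}
	\frac{2}{H_u^A(r)}\left( \frac{\int_{\partial B_r}\langle A\nabla u,\hat x\rangle^2\mu^{-1}\,d\sigma}{D_u^A(r)}H_u^A(r) - \left(\int_{\partial B_r}\langle A\nabla u,\hat x\rangle\,u\,d\sigma\right)^{\!2}\!\Big/\!\Big(\text{norm}\Big)\right) \ge 0,
\end{align*}
which is nonnegative by Cauchy–Schwarz with respect to the measure $\mu\, d\sigma$ on $\partial B_r$ (this is exactly where $A(0)=I$ / uniform ellipticity makes $\mu$ a genuine positive weight), while all the remaining terms are errors bounded by $C_1(M+\delta/r)$. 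Hence $\frac{d}{dr}\log N_u^A(r) \ge -C_1(M+\delta/r)$. The main obstacle I anticipate is bookkeeping: making sure every time one swaps $\langle A\nabla u,\hat x\rangle$ for $\mu\,\partial_\nu u$, or $\langle A\nabla u,\nabla u\rangle$ for its tangential/normal split, or $\int_{B_r}|\nabla u|^2$ for $D_u^A$, the incurred error is genuinely \emph{relative} (i.e.\ of the form $O(Mr+\delta)$ times the quantity being estimated) so that after dividing by $D_u^A(r)$ or $H_u^A(r)$ it becomes an additive $O(M+\delta/r)$ — and checking that the sign of the leading Cauchy–Schwarz term survives the $O(\delta)$ perturbation of the weight $\mu$. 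The smallness of $\delta$ is not needed for the sign (any $\lambda \le \mu \le \lambda^{-1}$ works), only to keep constants dimensional, so I would not even require $\delta$ small.
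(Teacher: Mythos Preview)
Your overall strategy (compute $D'$, $H'$ via Rellich-type identities, combine, apply Cauchy--Schwarz) matches the paper's, but there is a real gap at exactly the step you flag as ``bookkeeping.'' The issue is your choice of test vector field. Testing the equation against $x\cdot\nabla u$ produces the boundary term $2r\int_{\partial B_r}\partial_\nu u\,\langle A\nabla u,\nu\rangle$, \emph{not} $2r\int_{\partial B_r}\mu^{-1}\langle A\nabla u,\nu\rangle^2$ as you assert. Converting one to the other costs
\[
\Bigl|\int_{\partial B_r}\langle A\nabla u,\nu\rangle\bigl(\partial_\nu u-\mu^{-1}\langle A\nabla u,\nu\rangle\bigr)\Bigr|
\;\lesssim\;\delta\int_{\partial B_r}|\nabla u|^2\;\simeq\;\delta\,D'(r),
\]
since $\mu\,\partial_\nu u-\langle A\nabla u,\nu\rangle=\langle(\mu I-A)\nu,\nabla_T u\rangle$ with $(\mu I-A)\nu$ tangential and $|(\mu I-A)\nu|\lesssim\delta$. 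This error is of order $\delta N(r)\,D(r)/r$, not $\delta\,D(r)/r$: the boundary gradient integral is not controlled by $D(r)/r$ (for a degree-$k$ harmonic polynomial one has $D'(r)/D(r)\simeq k/r$), and it cannot be handled by a further spherical integration by parts without producing second derivatives. Your scheme therefore yields only $(\log N)'(r)\ge -C(M+\delta N(r)/r)$, strictly weaker than the stated lemma. Similarly, differentiating $H$ gives $2\int_{\partial B_r}u\,\partial_\nu u\,\mu$, not $2D(r)$; the conversion genuinely requires the $\delta/r$ error, contrary to your claim that ``the $\delta/r$ contribution \dots\ is actually not needed here.''

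The paper's fix is to replace $x$ in the Rellich identity by the adapted field $\beta(x)=A(x)x/\mu(x)$. This is not cosmetic: since $\langle\beta,\nabla u\rangle=r\mu^{-1}\langle A\nabla u,\nu\rangle$ on $\partial B_r$, the boundary term comes out directly as $2r\int_{\partial B_r}\mu^{-1}\langle A\nabla u,\nu\rangle^2$, which pairs with $D(r)=\int_{\partial B_r}u\langle A\nabla u,\nu\rangle$ and $H(r)=\int_{\partial B_r}\mu u^2$ for Cauchy--Schwarz with no leftover boundary-gradient terms. The price is that now $\divv\beta$ and $D\beta$ are only approximately $n$ and $I$, and \emph{this} is where the $O(Mr+\delta)$ bulk errors enter. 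For $H'$, the paper converts $\int u\,\partial_\nu u\,\mu$ to $D(r)$ by observing that $A\nu-\mu\nu$ is tangential to $\partial B_r$ and integrating by parts on the sphere, $\int_{\partial B_r}\langle\nabla(u^2),\mu\nu-A\nu\rangle=-\int_{\partial B_r}u^2\,\divv_{\partial B_r}(\mu\nu-A\nu)$, with $|\divv_{\partial B_r}(\mu\nu-A\nu)|\lesssim M+\delta/r$ (the $\delta/r$ arising from differentiating $x/|x|$).
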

	\begin{proof}
		Recall from the definition of $N_u^A$, we have that $\mu(x) \equiv \langle A(x)x/\abs{x}, x/\abs{x}\rangle$. Define the vector field $\beta(x) \coloneqq A(x)x/\mu(x)$. Since the unit outer normal $\nu(x)$ to $\partial B_r$ is given by $\nu(x) = x/\abs{x}$, then we have by definition of $\mu$ that
		\begin{align}
			A(x) \nu(x) \equiv \mu(x) \beta(x)/r, \, \,  \beta(x) \cdot \nu(x) = r, \, \, x \in \partial B_r,  \label{eqn:beta_norm}
		\end{align}
		Straight-forward arguments using the ellipticity and smoothness conditions on $A$ give us the following estimates on $\mu$ and $\beta$:
		\begin{align}
			& \mu(x)  \simeq 1, \, \abs{\nabla \mu(x)} \lesssim M + \delta/\abs{x}, \, \abs{\mu(x)  -1} \lesssim \delta, \label{eqn:mu_est} \\
			& \abs{\beta(x) - x} \lesssim \delta \abs{x}, \, \abs{D\beta(x) - I} \lesssim  M \abs{x}  + \delta, \label{eqn:beta_est}
		\end{align}
		with implicit constants depending just on the dimension and $\lambda$. Of course in order to prove the estimates above, one uses that if $A \equiv I$, then $\mu(x) \equiv 1$ and $\beta(x) \equiv x$. 
		For convenience, write $N(r) = N_u^A(r)$, $D(r) = D_u^A(r)$ and $H(r) = H_u^A(r)$. One computes
		\begin{align}\label{eqn:logN'}
			\dfrac{d}{dr} \log(N(r)) & = \dfrac{1}{r}  + \dfrac{D'(r)}{D(r)} - \dfrac{H'(r)}{H(r)},
		\end{align}
		and we deal with each term separately.
		
		First, as for $D'(r)$ we have from \eqref{eqn:beta_norm} and the divergence theorem that
		\begin{align*}
			D'(r) & = \int_{\partial B_r} \langle A \nabla u, \nabla u \rangle \\
			& = r^{-1} \int_{\partial B_r} \langle A \nabla u, \nabla u \rangle \langle \beta, \nu \rangle  \\
			& = r^{-1} \int_{B_r} \divv( \langle A \nabla u, \nabla u \rangle \beta ) ,
		\end{align*}
		and so using the symmetry of $A$,
		\begin{align*}
			rD'(r) & = \int_{B_r} \divv(\beta) \langle A \nabla u, \nabla u \rangle + \int_{B_r} \sum_{i,j,k} \beta^k (A_{ij})_{x_k} u_{x_i} u_{x_j} + 2 \int_{B_r} \sum_{i,j,k} \beta^k  A_{ij} u_{x_i} u_{x_j x_k} \\
			& \coloneqq T_1 + T_2 + T_3.
		\end{align*}
		By courtesy of \eqref{eqn:beta_est} and the fact that $\mathrm{Trace}(I) = n$, we have
		\begin{align*}
			T_1 = n D(r) + O(M r + \delta) D(r) , \,\, T_2 = O(M r) D(r).
		\end{align*}
		As for $T_3$, we integrate by parts, use that $u$ solves \eqref{eqn:soln}, and again apply \eqref{eqn:beta_norm}, \eqref{eqn:beta_est} to obtain 
		\begin{align*}
			T_3 & = -2 \int_{B_r} \sum_{ijk} (\beta^k)_{x_j} A_{ij} u_{x_i} u_{x_k} + 2 \int_{\partial B_r} \langle \beta, \nabla u \rangle \langle A \nabla u, \nu \rangle \\
			& = -2 D(r) + O(Mr + \delta) D(r) + 2r \int_{\partial B_r} \mu^{-1} \langle A \nabla u, \nu \rangle^2 .
		\end{align*}
		Finally, we use once again that $u$ is a solution to rewrite
		\begin{align}\label{eqn:D}
			D(r) & = \int_{B_r} \divv(u A \nabla u)  = \int_{\partial B_r} u \langle A \nabla u, \nu\rangle. 
		\end{align}
		Combining all of the the previous equalities yields
		\begin{align}\label{eqn:D'}
			D'(r)/D(r) & = \dfrac{n-2}{r} + O(M + \delta/r) + 2 \dfrac{ \int_{\partial B_r} \mu^{-1} \langle A \nabla u, \nu \rangle^2  }{  \int_{\partial B_r} u \langle A \nabla u, \nu \rangle   }.
		\end{align} 
		
		Moving on to $H'(r)$, one easily computes with the estimate on $\mu$ from \eqref{eqn:mu_est} that 
		\begin{align*}
			H'(r) & = \dfrac{n-1}{r} H(r) + \int_{\partial B_r} \partial_\nu ( u^2 \mu) \\
			& = \dfrac{n-1}{r} H(r) + O(M + \delta/r) H(r) + 2\int_{\partial B_r} u \partial_\nu u \mu. 
		\end{align*}
		Next we notice that $A(x) \nu(x) - \mu(x) \nu(x) \equiv r^{-1} \mu(x) (\beta(x) - x)$ is a tangent vector field to $\partial B_r$ which satisfies
		\begin{align*}
			\abs{\divv_{\partial B_r} (A\nu - \mu \nu)} \lesssim M + \delta/r.
		\end{align*}
		Indeed the estimate above again comes from the estimates \eqref{eqn:mu_est}, \eqref{eqn:beta_est}. Thus we can apply the divergence theorem on $\partial B_r$ and use the symmetry of $A$ to obtain
		\begin{align*}
			\abs{ \int_{\partial B_r} 2 u \partial_\nu u \mu  - 2 u \langle A \nabla u, \nu \rangle } & = \abs{ \int_{\partial B_r} \langle \nabla (u^2), \mu \nu - A \nu \rangle   } \\
			& = \abs{ \int_{\partial B_r}  u^2 \divv_{\partial B_r}(\mu \nu - A \nu) } = O(M + \delta/r) H(r).
		\end{align*}
		Moreover, we can use this to rewrite
		\begin{align}\label{eqn:H'}
			H'(r) & = \left( \dfrac{n-1}{r} + O(M + \delta/r) \right) H(r) + 2 \int_{\partial B_r} u \langle A \nabla u, \nu \rangle. 
		\end{align}
		With the estimates \eqref{eqn:D'} and \eqref{eqn:H'}, we substitute into \eqref{eqn:logN'} to obtain
		\begin{align*}
			\dfrac{d}{dr} \left( \log N(r) \right) = 2 \left( \dfrac{\int_{\partial B_r} \mu^{-1} \langle A \nabla u, \nu \rangle^2 }{ \int_{\partial B_r} u \langle A \nabla u, \nu \rangle }  - \dfrac{  \int_{\partial B_r} u \langle A \nabla u, \nu \rangle }{  \int_{\partial B_r} u^2 \mu   }\right) + O(M + \delta/r),
		\end{align*}
		and conclude the proof of the Lemma by Cauchy-Schwarz. 
	\end{proof}

	\begin{rmk}
		Although they are not the most important parts of Lemma \ref{lem:almost_monoton}, we record for future use the estimates shown above that the vector field $\beta(x)$ and conformal factor $\mu(x)$ satisfy
		\begin{align*}
			\abs{ A \nu - \mu \nu } \lesssim \delta, \, \, \abs{ \divv_{\partial B_r} (A \nu - \mu \nu)   } \lesssim M + \delta/r 
		\end{align*}
		on $\partial B_r$ under the assumptions of Lemma \ref{lem:almost_monoton}.
	\end{rmk}

	With our main tools collected, let us first provide a proof of the unique continuation principle for solutions $u$ that solve \eqref{eqn:soln} in $B_1$ whenever $A$ has an Osgood modulus of continuity $\omega$ with some extra structure. To this end, we continue directly with the growth properties of the frequency function $N_u^A$ defined in Definition \ref{defn:almgren}. Our goal is to show that $N_u^A(r) < \infty$ for all $r \in (0,1)$ by appealing to Corollary \ref{cor:growth_disc}. First, we collect some estimates regarding a non-trivial solution $u$ and its distance to solutions with smooth coefficients.

	In the remainder of this section, we shall frequently assume:
	\begin{enumerate}[({A}1)]
		\item $u$ is a non-trivial solution of \eqref{eqn:soln}, \label{cond:A1}
		\item $A$ is a symmetric, uniformly elliptic matrix with constant $\lambda >0$, and $A(0) = I$, \label{cond:A2}
		\item $A$ has modulus of continuity $\omega$ which satisfies the Osgood condition \eqref{eqn:osgood}, \label{cond:A3}
		\item $0 < r < 1$ is fixed and $0 < N_0 \le N_u^A(r) < \infty$, \label{cond:A4}
	\end{enumerate}
	where $N_0  \gg 1$ is a large constant depending just on the dimension $n$ and $\omega$. As in Proposition \ref{prop:mod}, we write $\phi(s) \coloneqq \omega(s)/s$ and $\psi(s) = \phi(1/s)$. Furthermore, we fix $\eta \in C_c^\infty(B_1)$ to be a smooth bump function with $\int \eta \,dx = 1$.

	\begin{lemma}\label{lem:approx_v}
		Assume \ref{cond:A1}-\ref{cond:A4}, and suppose that $0 < \epsilon < r/2< 1/4$. Let $\ol{A} \coloneqq A * \eta_\epsilon$, and let $v$ be the solution of 
		\begin{equation}\label{eqn:approx_soln}
			\begin{alignedat}{3}
			-\divv(\ol{A} \, \nabla v) & = 0 && \text{ in } B_r,\\
			v & = u && \text{ on } \partial B_r.
			\end{alignedat}
		\end{equation}
		Then for some constant $C_1 >0$ depending just on $n$, $\lambda$, and the choice of $\eta$ we have:
		\begin{align*}
			\fint_{B_r} \abs{ \nabla (u-v) }^2 \, dx & \le C_1 \omega(\epsilon)^2 \fint_{B_r} \abs{\nabla u}^2\, dx , \\
			\fint_{\partial B_{tr}} (u-v)^2 \, d\sigma & \le C_1 (1-t)  \omega(\epsilon)^2 r^2 \fint_{B_r} \abs{\nabla u}^2 \, dx, \, \, tr \in (r(1-\epsilon), r).
		\end{align*} 
	\end{lemma}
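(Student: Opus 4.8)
The plan is to work with $w \coloneqq u - v$, which lies in $H^1_0(B_r)$ by construction, and run the classical energy estimate. Since $-\divv(A\nabla u)=0$ in $B_1 \supset B_r$ and $-\divv(\ol A\,\nabla v)=0$ in $B_r$, testing against any $\varphi\in H^1_0(B_r)$ (extended by zero to $B_1$) gives
\[
	\int_{B_r}\langle \ol A\,\nabla w,\nabla\varphi\rangle \;=\; \int_{B_r}\langle (\ol A-A)\nabla u,\nabla\varphi\rangle .
\]
Before exploiting this I would record two elementary facts. First, $\ol A=A*\eta_\epsilon$ is again symmetric and uniformly elliptic with the \emph{same} constant $\lambda$, because the two ellipticity inequalities are preserved under averaging against the probability density $\eta_\epsilon$; note the hypothesis $\epsilon<r/2<1/4$ ensures $B_{r+\epsilon}\subset B_{3/4}\subset B_1$, so $\ol A$ is well defined on $B_r$. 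Second, $\|\ol A-A\|_{L^\infty(B_r)}\lesssim\omega(\epsilon)$: for $x\in B_r$ one has $\ol A(x)-A(x)=\int (A(x-y)-A(x))\eta_\epsilon(y)\,dy$ and $|A(x-y)-A(x)|\le\omega(|y|)\le\omega(\epsilon)$ on $\spt\eta_\epsilon\subset B_\epsilon$, with implicit constant depending only on $\eta$.

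For the first estimate I would take $\varphi=w$, use ellipticity of $\ol A$ on the left and Cauchy--Schwarz plus the $L^\infty$ bound on $\ol A-A$ on the right, to get $\lambda\|\nabla w\|_{L^2(B_r)}^2\le C\omega(\epsilon)\|\nabla u\|_{L^2(B_r)}\|\nabla w\|_{L^2(B_r)}$, hence $\|\nabla w\|_{L^2(B_r)}\le C\lambda^{-1}\omega(\epsilon)\|\nabla u\|_{L^2(B_r)}$. Dividing by $|B_r|$ yields the claimed bound up to replacing $\omega(\epsilon)^2$ by $\omega(\epsilon)$, which is harmless: since $A(0)=I$ and $A$ is uniformly elliptic one has $|A(x)-A(y)|\lesssim_\lambda 1$ on $B_1$, so we may assume $\omega(\epsilon)\le\omega(1)\lesssim_\lambda 1$ and absorb the surplus factor into $C_1$.

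For the boundary estimate I would use that $w\equiv 0$ on $\partial B_r$ and integrate along radii: for $\theta\in\S^{n-1}$ and $tr\in(r(1-\epsilon),r)$, $w(tr\theta)=-\int_{tr}^r\partial_\rho w(\rho\theta)\,d\rho$, so Cauchy--Schwarz gives $w(tr\theta)^2\le r(1-t)\int_{tr}^r|\nabla w(\rho\theta)|^2\,d\rho$. Multiplying by $(tr)^{n-1}$, integrating over $\S^{n-1}$, and bounding $\int_{\S^{n-1}}\int_{tr}^r|\nabla w(\rho\theta)|^2\,d\rho\,d\theta\le (tr)^{-(n-1)}\int_{B_r\setminus B_{tr}}|\nabla w|^2\,dx$ leads to $\int_{\partial B_{tr}}w^2\,d\sigma\le r(1-t)\int_{B_r}|\nabla w|^2\,dx$; passing to averages (and using $t\ge 1-\epsilon\ge c(n)$ so that $t^{n-1}\simeq_n 1$) gives $\fint_{\partial B_{tr}}w^2\,d\sigma\lesssim_n (1-t)r^2\fint_{B_r}|\nabla w|^2\,dx$. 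Inserting the $H^1$ bound from the previous paragraph completes the proof. A routine density argument (approximating $w\in H^1_0(B_r)$ in $H^1$ by smooth functions, or using the trace theorem on thin annuli) legitimizes the pointwise radial integration.

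There is no genuinely difficult step here: it is the standard "freezing/averaging the coefficients" stability argument. The only points that need a little care are the bookkeeping of constants --- in particular the reduction from $\omega(\epsilon)^2$ to $\omega(\epsilon)$ via the a priori bound $\omega\lesssim_\lambda 1$, and verifying that $\ol A$ inherits the ellipticity constant $\lambda$ exactly rather than a worse one --- together with the measure-theoretic justification of the fundamental-theorem-of-calculus step along rays.
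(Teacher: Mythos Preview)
Your proof is correct and follows essentially the same approach as the paper: the paper obtains the gradient bound by invoking its packaged stability estimate (Lemma~\ref{lem:approx_id} and Corollary~\ref{cor:qst_2}), which under the hood is exactly the energy-testing argument you wrote out directly, and for the boundary estimate the paper uses the identical fundamental-theorem-of-calculus along rays plus coarea argument. The only cosmetic difference is that your direct testing yields the slightly sharper power $\omega(\epsilon)^2$, which you then relax to $\omega(\epsilon)$ to match the statement, whereas the paper's corollary produces $\omega(\epsilon)$ from the start.
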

	
	\begin{proof}
		Since $A$ is uniformly elliptic with constant $\lambda >0$, then we readily see that $\ol{A}$ is also uniformly elliptic and symmetric with the same constant. Applying Lemmas \ref{lem:approx_id} \ref{lem:qst2_fix}, we immediately obtain the estimate
		\begin{align}\label{eqn:gradient_approx}
			\fint_{B_r} \abs{\nabla(u-v)}^2 \, dx \lesssim \omega(\epsilon)^2 \fint_{B_r} \abs{\nabla u}^2 \, dx 
		\end{align}
		with implicit constant depending just on the dimensions and $\lambda$. As for the second estimate, notice that if $x \in \partial B_r$, and $t \in (1- \epsilon ,1)$, then we can estimate by the fundamental theorem of calculus and the fact that $u \equiv v$ on $\partial B_r$, 
		\begin{align*}
			\abs{(u(tx) - v(tx))}^2 \le \left( \int_{\gamma_{x,t}} \abs{ \nabla(u-v)  } \, d\HD^1\right)^2 \lesssim \HD^{1}(\gamma_{x,t}) \int_{\gamma_{x,t}} \abs{ \nabla (u-v)}^2 \, d\HD^1,
		\end{align*}
		where $\gamma_{x,t} = [tx, x]$ is the segment connecting $tx$ and $x$. Integrating the above for $x \in \partial B_r$, recalling that $t \simeq 1$, and applying the coarea formula leaves us with
		\begin{align*}
			\int_{\partial B_{tr}} (u-v)^2 \, d\sigma \lesssim (1-t)r \int_{B_r} \abs{ \nabla(u-v) }^2 \, dx,
		\end{align*}
		which, after combining with \eqref{eqn:gradient_approx}, gives the second desired estimate.
	\end{proof}
	
	With the previous estimate in hand, we can prove the following dichotomy: either the frequency function is bounded at scale $r$, or otherwise it does not grow too quickly from scale $r$ to scale $r(1-1/N(r))$.

	\begin{lemma}\label{lem:dichot}
		Assume the hypotheses \ref{cond:A1}-\ref{cond:A4} and write $N_u^A(t) = N(t)$. Then the following alternative holds:
		\begin{align*}
			N(r) < N_0, \text{ or } N(  r (1 - s/(N(r) ) \le N(r) + C_1 r \psi(N(r)/r), \, \, s \in [0,1],
		\end{align*}
		where $C_1, N_0 > 0$ are constants depending only on the dimension and the ellipticity constant $\lambda$.
	\end{lemma}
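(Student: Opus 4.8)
We sketch the argument. The plan is to freeze the scale at $r$, mollify $A$ at the \emph{single} scale $\epsilon \simeq r/N(r)$, and compare $u$ with solutions of the mollified equation on the thin shell between $B_{tr}$ and $B_r$, where $t = 1 - s/N(r)$. Assume we are not in the first alternative, so $N(r) \ge N_0$ with $N_0$ a large constant (depending on $n,\lambda,\omega$, as in \ref{cond:A4}, whose sole purpose is to make $\omega(\epsilon)$ small). Set $\epsilon \coloneqq 2r/N(r)$ (so $\epsilon < r/2$ once $N_0 > 4$), $\ol A \coloneqq A * \eta_\epsilon$, let $v$ be as in Lemma~\ref{lem:approx_v} ($\ol A$-harmonic in $B_r$, $v = u$ on $\partial B_r$), and — the new ingredient — let $v_0$ be $\ol A$-harmonic in $B_{tr}$ with $v_0 = u$ on $\partial B_{tr}$. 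By \ref{cond:A2}, \ref{cond:A3} and Proposition~\ref{prop:mod}, on $B_r$ one has $\abs{\ol A - I} \lesssim \omega(r)$ and $\abs{\nabla \ol A} \lesssim \omega(\epsilon)/\epsilon = \phi(\epsilon) \le \phi(r/N(r)) = \psi(N(r)/r)$, as well as $\abs{\mu_{\ol A} - \mu_A} \le \omega(\epsilon)$ and $\abs{\langle (A - \ol A)\xi,\xi\rangle} \le \omega(\epsilon)\abs{\xi}^2$. Since $u = v$ on $\partial B_r$, the Dirichlet-energy identity $\int_{B_r}\langle \ol A \nabla u,\nabla u\rangle = \int_{B_r}\langle \ol A \nabla v,\nabla v\rangle + \int_{B_r}\langle \ol A \nabla(u-v),\nabla(u-v)\rangle$ together with the \emph{sharp} stability estimate $\norm{\nabla(u-v)}_{L^2(B_r)} \lesssim \omega(\epsilon)\norm{\nabla u}_{L^2(B_r)}$ (the form underlying Lemma~\ref{lem:approx_v}, obtained by testing the equation for $u-v$ against itself) gives $D_u^A(r) = (1+O(\omega(\epsilon)))D_v^{\ol A}(r)$ and $H_u^A(r) = (1+O(\omega(\epsilon)))H_v^{\ol A}(r)$, hence $N_v^{\ol A}(r) = (1+O(\omega(\epsilon)))N(r)$. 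Applying Lemma~\ref{lem:almost_monoton} to $v$ with $M \lesssim \psi(N(r)/r)$ and $\delta \lesssim \omega(r)$, and noting $\int_{tr}^r (M + \delta/\rho)\,d\rho \lesssim \psi(N(r)/r)(r-tr) = s\,\omega(r/N(r)) = O(\omega(\epsilon))$, we obtain $N_v^{\ol A}(\rho) \le N(r)(1 + C\omega(\epsilon)) \lesssim N(r)$ for all $\rho \in [tr,r]$, so in particular $N_v^{\ol A}(tr) \le N(r) + Cr\psi(N(r)/r)$; moreover the $H$-part of Lemma~\ref{lem:almost_monoton} integrates to $H_v^{\ol A}(r) \simeq H_v^{\ol A}(tr)$, whence $D_v^{\ol A}(r) \simeq \bigl(N_v^{\ol A}(r)/N_v^{\ol A}(tr)\bigr)D_v^{\ol A}(tr)$.

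It remains to transfer $N_v^{\ol A}(tr)$ back to $N_u^A(tr)$; here $u$ and $v$ disagree on $\partial B_{tr}$, which is why $v_0$ is introduced. Because $u = v_0$ on $\partial B_{tr}$, the denominators again compare cleanly, $H_u^A(tr) = (1+O(\omega(\epsilon)))H_{v_0}^{\ol A}(tr)$, and the Pythagorean identity in $B_{tr}$ gives $D_u^A(tr) \le (1+O(\omega(\epsilon)))\bigl(D_{v_0}^{\ol A}(tr) + \norm{\nabla(u-v_0)}_{L^2(B_{tr})}^2\bigr)$ with $\norm{\nabla(u-v_0)}_{L^2(B_{tr})} \lesssim \omega(\epsilon)\norm{\nabla u}_{L^2(B_r)}$, again by testing. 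Writing $v_0 = v + (v_0 - v)$, where $v_0 - v$ is $\ol A$-harmonic in $B_{tr}$ with boundary values $(u-v)|_{\partial B_{tr}}$, and expanding $D_{v_0}^{\ol A}(tr)$ and $H_{v_0}^{\ol A}(tr)$, every error term has the form (main term)$\cdot O(\omega(\epsilon))$ — using $\int_{\partial B_{tr}}(u-v)^2 \lesssim \omega(\epsilon)^2 H_v^{\ol A}(tr)$, which follows from the co-area estimate of Remark~\ref{rmk:coarea}, the sharp stability estimate, and the preceding paragraph — except for the single cross term $2\int_{\partial B_{tr}}(u-v)\langle \ol A \nabla v,\nu\rangle$. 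Since $u = v$ on $\partial B_r$ and $v$ is $\ol A$-harmonic in $B_r$, an integration by parts rewrites this as $-2\int_{B_r \setminus B_{tr}}\langle \ol A \nabla v,\nabla(u-v)\rangle$, hence it is bounded by $\norm{\nabla v}_{L^2(B_r\setminus B_{tr})}\norm{\nabla(u-v)}_{L^2(B_r)} \lesssim \bigl(D_v^{\ol A}(r)\bigr)^{1/2}\cdot \omega(\epsilon)\bigl(D_v^{\ol A}(r)\bigr)^{1/2} = \omega(\epsilon)D_v^{\ol A}(r) \lesssim \omega(\epsilon)\,N_v^{\ol A}(r)\,D_v^{\ol A}(tr)/N_v^{\ol A}(tr)$. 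Tracking this through the ratio defining $N_{v_0}^{\ol A}(tr) = tr\,D_{v_0}^{\ol A}(tr)/H_{v_0}^{\ol A}(tr)$ then yields $N_{v_0}^{\ol A}(tr) \le N_v^{\ol A}(tr) + C\omega(\epsilon)\bigl(N_v^{\ol A}(r) + N_v^{\ol A}(tr)\bigr) \le N_v^{\ol A}(tr) + C\omega(\epsilon)N(r)$. The crucial point is that the stability estimate is used in the sharp form, linear in $\omega(\epsilon)$: with only $\norm{\nabla(u-v)}_{L^2} \lesssim \omega(\epsilon)^{1/2}\norm{\nabla u}_{L^2}$ this cross term would be $O(\omega(\epsilon)^{1/2}N(r))$, which is far too large to be absorbed into the target error $r\psi(N(r)/r) = N(r)\,\omega(r/N(r)) \simeq N(r)\,\omega(\epsilon)$.

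Finally, feeding $\norm{\nabla(u-v_0)}_{L^2(B_{tr})}^2 \lesssim \omega(\epsilon)^2 D_v^{\ol A}(r) \lesssim \omega(\epsilon)N(r)\,D_v^{\ol A}(tr)/N_v^{\ol A}(tr)$ and $H_u^A(tr) \simeq H_{v_0}^{\ol A}(tr) \simeq H_v^{\ol A}(tr)$ into $N_u^A(tr) = tr\,D_u^A(tr)/H_u^A(tr)$ gives $N_u^A(tr) \le N_{v_0}^{\ol A}(tr)(1+O(\omega(\epsilon))) + O(\omega(\epsilon)N(r))$; combined with the previous paragraph and $N_v^{\ol A}(tr) \le N(r)(1+C\omega(\epsilon))$, and using $\omega(\epsilon)N(r) \le 2\,\omega(r/N(r))\,N(r) = 2r\,\psi(N(r)/r)$ (concavity of $\omega$ and $\epsilon = 2r/N(r)$), we conclude
\[
	N_u^A\bigl(r(1 - s/N(r))\bigr) \;\le\; N(r) + C_1\,r\,\psi(N(r)/r)
\]
for every $s \in [0,1]$ simultaneously, which is the second alternative, with $C_1$ depending only on $n$ and $\lambda$. (All the expansions ``$1 + O(\omega(\epsilon))$'' above are legitimate once $\omega(\epsilon) \le \omega(r/N_0)$ lies below a threshold depending only on $n,\lambda$.)

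The main obstacle is the inner-scale transfer in the second paragraph: recognizing that the boundary mismatch at $\partial B_{tr}$ (between $u$ and $v$) produces, after the clean comparison $u \leftrightarrow v_0$, a single cross term that can be re-expressed as an integral over the thin shell $B_r \setminus B_{tr}$ and then controlled by the \emph{quadratic}-in-$\omega(\epsilon)$ quantitative stability bound, so that no factor $\omega(\epsilon)^{1/2}$ survives — a loss which would be fatal against the Osgood-type error $r\psi(N(r)/r)$. Everything else is careful but routine bookkeeping of error terms, using that $N_v^{\ol A}$ stays $\lesssim N(r)$ throughout $[tr,r]$ by Lemma~\ref{lem:almost_monoton}.
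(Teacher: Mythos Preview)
Your argument is correct and follows the paper's strategy: mollify at scale $\epsilon \simeq r/N$, compare $u$ with the $\ol A$-harmonic $v$, apply Lemma~\ref{lem:almost_monoton} to $v$ across the thin shell, and transfer back at scale $r(1-s/N)$. Your transfer-back step --- introducing the auxiliary $v_0$, isolating the single boundary cross term $\int_{\partial B_{tr}}(u-v)\langle\ol A\nabla v,\nu\rangle$, and rewriting it as a shell integral --- is more elaborate than the paper's (which simply asserts the reverse transfer is ``similar'' to the forward one and cites Lemma~\ref{lem:approx_v}); the device $v_0$ is a packaging convenience rather than a necessity, since the same cross term appears and is handled identically if one compares $u$ and $v$ directly at $tr$. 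You are right, however, that the \emph{sharp} stability bound $\norm{\nabla(u-v)}_{L^2}\lesssim\omega(\epsilon)\norm{\nabla u}_{L^2}$ (obtained by testing, as you note) is exactly what is needed here: the form stated in Lemma~\ref{lem:approx_v} gives only $\norm{\nabla(u-v)}_{L^2}^2\lesssim\omega(\epsilon)\norm{\nabla u}_{L^2}^2$, which by itself would leave an $O(\omega(\epsilon)^{1/2}N)$ error in the reverse transfer and fail against the target $r\psi(N/r)=N\omega(\epsilon)$.
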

	\begin{proof}
		Let $r \in (0,1)$ be fixed, and set $N \coloneqq N_u^A(r)$. We need only consider the case $N > N_0$, where $N_0$ is large and fixed. Since the hypothesis and conclusion of the Lemma are invariant under multiplication by scalars, we may normalize $u$ so that 
		\begin{align*}
			r^2 \fint_{B_r} \langle A \nabla u, \nabla u \rangle \, dx =  \dfrac{\sigma(\partial B_1)}{\abs{B_1}} N, \qquad   \fint_{\partial B_r} \mu_A u^2 \, d\sigma = 1,
		\end{align*}
		where $\mu_A(x) = \langle A(x) x/\abs{x}, x/\abs{x} \rangle$.
		
		Define $\epsilon = N^{-1}r$. As in Lemma \ref{lem:approx_v}, we let $v$ be a solution of \eqref{eqn:approx_soln}, and denote by $N_v^{\ol{A}}$ the frequency function corresponding to $v$. If we denote by $\mu_{\ol{A}} = \langle \ol{A}(x) x/\abs{x}, x/\abs{x} \rangle$, then Lemma \ref{lem:approx_id} readily gives
		\begin{align}\label{eqn:ola_approx}
			\abs{A - \ol{A}} + \abs{\mu_A(x) - \mu_{\ol{A}}(x)} \lesssim \omega(\epsilon) \ll 1, \, \, \abs{\nabla \ol{A}} \lesssim \epsilon^{-1} \omega(\epsilon) = \phi(\epsilon),
		\end{align}
		and thus from the normalization of $u$ and Lemma \ref{lem:approx_v} it is straight-forward to verify
		\begin{align}\label{eqn:transfer1}
			N_v^{\ol{A}}(r) \le N( 1 + C \omega(\epsilon)).
		\end{align}
		For example, we may estimate
		\begin{align*}
			 \int_{B_r} \langle \ol{A} \nabla v, \nabla v\rangle  & \le C \abs{A- \ol{A}}_{L^\infty} \int_{B_r} \abs{\nabla v}^2  + \int_{B_r} \langle A \nabla v, \nabla v \rangle \\
			 & \le C   \omega(\epsilon) \int_{B_r} \abs{\nabla v}^2 + C \norm{\nabla(u-v)}_{L^2(B_r)} \left( \norm{\nabla u}_{L^2 (B_r)} + \norm{\nabla v}_{L^2 (B_r)} \right)  \\
			  & \qquad \qquad \qquad \qquad  + \int_{B_r} \langle A \nabla u , \nabla u \rangle, \\
			  & \le C \omega(\epsilon) \left( \int_{B_r} \abs{\nabla v}^2 +\abs{\nabla u}^2 \right) + \int_{B_r} \langle A \nabla u , \nabla u \rangle,
		\end{align*}
		so that 
		\begin{align}\label{eqn:just_1}
			r^2 \fint_{B_r} \langle \ol{A} \nabla v, \nabla v \rangle \le r^{2} \fint_{B_r} \langle A \nabla u, \nabla u \rangle + C N \omega(\epsilon).
		\end{align}
		A similar computation shows that for $r_s = r(1-s/N)$ with $s \in [0,1]$,
		\begin{align}\label{eqn:just_2}
			\abs{ \fint_{\partial B_{r_s}} \mu_{\ol{A }} \, v^2 - \mu_{A} u^2 } \le C \omega(\epsilon), 
		\end{align}
		and \eqref{eqn:transfer1} is just a consequence of \eqref{eqn:just_1} and \eqref{eqn:just_2} for $s = 0$.

		Notice that $\abs{\ol{A} - I} \lesssim \omega(2r)$, since $A(0) = I$, and thus Lemma \ref{lem:almost_monoton} applied to the solution $v$ tells us 
		\begin{align*}
			\dfrac{d}{dt} \log\left(  N_v^{\ol{A}}(t)   \right) \ge - C( \phi(\epsilon) + \omega(2r)/t) \ge -C \phi(\epsilon), \, \, t \in (r-\epsilon, r),
		\end{align*}
		since $\phi(s) = \omega(s)/s$ is a decreasing function. Integrating the above from $r_s = r ( 1-s/N)$ to $r$ and applying \eqref{eqn:transfer1} gives (provided that $N_0$ is large)
		\begin{align}\label{eqn:transfer2}
			N_{v}^{\ol{A}}(r_s) \le  e^{ C \epsilon \phi(\epsilon) } N_v^{\ol{A}}(r) \le N_v^{\ol{A}}(r) (1 + C \omega(\epsilon)) \le N ( 1 + C \omega(\epsilon)).
		\end{align}
		Finally, using the same estimates from Lemma \ref{lem:approx_v} to obtain \eqref{eqn:transfer1} gives us in a similar fashion
		\begin{align*}
			N_u^A(r_s) & = \dfrac{(r_s)^2 \fint_{B_{r_s}} \langle A \nabla u, \nabla u \rangle }{  \fint_{\partial B_{r_s}} \mu_A u^2  } \\
			& \le \dfrac{(r_s)^2 \fint_{B_{r_s}} \langle \ol{A} \nabla v, \nabla v \rangle }{  \fint_{\partial B_{r_s}} \mu_{\ol{A}} \,  v^2  } + C \omega(\epsilon)N \\
			& = N_v^{\ol{A}}(r_s) + C N \omega(\epsilon) \le N( 1 + C \omega(\epsilon)).
		\end{align*}
		Recalling that $\epsilon = r/N$ and $\omega(\epsilon) = \epsilon \psi(\epsilon^{-1})$  gives the desired inequality.
	\end{proof}

	As a Corollary of Lemma \ref{lem:dichot} and Corollary \ref{cor:growth_disc}, we prove the boundedness of the frequency function. We shall also see in the proof that such a bound on $N_u^A(r)$ also readily gives a bound on  the doubling properties of $r \ra \fint_{\partial B_r} u^2 \, d\sigma$.
	
	\begin{thm}\label{thm:freq_bdd}
		Assume the hypotheses \ref{cond:A1}-\ref{cond:A3}, and assume in addition that $\omega$ is such that for $\phi(s) = \omega(s)/s$, there is a constant $C_M > 0$ and $g \in L^1((0,\infty); \R_+)$ satisfying
		\begin{align}
			\phi(ts) & \le C_M \phi(t) g(s),  \, \, t,s \in (0,1), \label{eqn:special_mod} \\
			g(t) & \le C_M g(\gamma t), \, \, t \in (0,1), \gamma \in (1/2, 1). \label{eqn:special_mod2}
		\end{align} 
		Then we have the uniform boundedness
		\begin{align*}
			\sup_{0 < r \le 1} N_u^A(r) & \le N < \infty, \\
			\fint_{\partial B_r } u^2 \, d\sigma & \le 2^{N} \fint_{\partial B_{r/2}} u^2 \, d\sigma, \, \, 0 < r \le 1,
		\end{align*}
		where $N$ is a constant depending just on the dimension $n$, ellipticity constant $\lambda$ associated to $A$, $\omega$, and $N_u^A(1)$. 
	\end{thm}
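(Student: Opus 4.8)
The plan is to apply Corollary~\ref{cor:growth_disc} to a suitable decreasing majorant $f$ of $r \mapsto N_u^A(r)$, with source term $g(r) \coloneqq C_M\phi(r)$: Lemma~\ref{lem:dichot} will supply the discrete differential inequality \eqref{eqn:diff_ineq_disc}, and the hypotheses \eqref{eqn:special_mod}--\eqref{eqn:special_mod2} on $\omega$ are exactly what makes $g$ admissible in that corollary. (We may assume $N_u^A$ is finite, hence continuous, on $(0,1]$; the complementary case, in which $u$ vanishes on a ball $B_{r_0}$ with $r_0 < 1$, is handled by localizing the argument below to $(r_0,1]$.) With $N_0$ the constant of Lemma~\ref{lem:dichot}, set
\begin{align*}
	f(r) \coloneqq \max\Bigl( N_0,\ \sup_{r \le t \le 1} N_u^A(t) \Bigr), \qquad 0 < r \le 1,
\end{align*}
and $f(0) \coloneqq \lim_{r \to 0^+} f(r) \in (0, \infty]$. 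Then $f$ is decreasing, dominates $N_u^A$, and $f(1) < \infty$; passing to this majorant is what repairs the fact that, for rough $A$, the frequency $N_u^A$ itself need not be monotone and so cannot be fed to Corollary~\ref{cor:growth_disc} directly.

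The crux is to check that $f$ obeys \eqref{eqn:diff_ineq_disc} with $g(r) = C_M\phi(r)$. Fix $r$ with $M \coloneqq f(r) < \infty$; since $\sup_{[r,1]} N_u^A \le M$, it suffices to bound $N_u^A$ on the window $[r(1-1/M), r]$ by $M + C r\, \psi(M)\, \phi(r)$. If $N_u^A < N_0$ on all of this window we are done; otherwise let $\rho$ be the largest scale in $[r(1-1/M), r]$ with $N_u^A(\rho) \ge N_0$, so that (by continuity) $N_0 \le N_u^A(\rho) \le M$ and $N_u^A < N_0 \le M$ on $(\rho, r]$ by maximality. Applying Lemma~\ref{lem:dichot} at scale $\rho$ bounds $N_u^A$ by $M + C_1 \rho\, \psi(M/\rho)$ on $[\rho(1 - 1/N_u^A(\rho)), \rho]$, and since $N_u^A(\rho) \le M$ and $\rho \le r$ this interval contains $[r(1-1/M), \rho]$. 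As $\rho \ge r/2$, $\psi$ is increasing, and \eqref{eqn:special_mod} gives both $\psi(M/\rho) \le C_M\psi(2)\psi(M/r)$ and $\psi(M/r) \le C_M\psi(M)\psi(1/r) = C_M\psi(M)\phi(r)$, the claimed bound follows, i.e.
\begin{align*}
	f\bigl(r(1-1/f(r))\bigr) \le f(r) + C'\, r\, \psi(f(r))\, g(r), \qquad g(r) \coloneqq C_M\phi(r).
\end{align*}

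Finally, $g$ is admissible for Corollary~\ref{cor:growth_disc}: since $\omega$ is concave with $\omega(0)=0$, $\phi$ is decreasing (Proposition~\ref{prop:mod}), so $g(s) \le g(\gamma s)$ for $\gamma \in (1/2,1)$ and $g \in L^\infty_{\loc}(0,1)$, while $\int_0^1 g = C_M\int_0^1\phi < \infty$ by \eqref{eqn:special_mod2}. Corollary~\ref{cor:growth_disc} therefore bounds $f$, hence $N_u^A$, on $[0,1]$ by a constant $N$ depending only on $n$, $\lambda$, $\omega$ and $N_u^A(1)$. The doubling estimate for $r \mapsto \fint_{\partial B_r} u^2$ is then a standard consequence of this uniform frequency bound: approximating $A$ by $A * \eta_\epsilon$ as in Lemma~\ref{lem:approx_v} and using the second estimate of Lemma~\ref{lem:almost_monoton} gives $\tfrac{d}{dr}\log\bigl(r^{-n+1} H_u^A(r)\bigr) = 2 N_u^A(r)/r + O(\phi(r))$, and integrating from $r/2$ to $r$, using $H_u^A(r) \simeq \fint_{\partial B_r} u^2$ and $\int_0\phi < \infty$, yields the claim (cf. the proof of Theorem~\ref{thm:iso_main2}). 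The only genuine difficulty here is the bookkeeping just described — choosing the majorant and invoking Lemma~\ref{lem:dichot} at the correct scale $\rho$ (whose window still dips below $r$), and converting $\psi(N_u^A(r)/r)$ into $\psi(N_u^A(r))\,\phi(r)$ via \eqref{eqn:special_mod} — since the substantive estimates are already contained in Lemmas~\ref{lem:almost_monoton}, \ref{lem:approx_v}, \ref{lem:dichot} and Corollary~\ref{cor:growth_disc}.
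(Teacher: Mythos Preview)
Your proof is correct and follows essentially the same strategy as the paper: pass to a decreasing majorant of $N_u^A$, verify the discrete differential inequality \eqref{eqn:diff_ineq_disc} via Lemma~\ref{lem:dichot} together with the submultiplicativity \eqref{eqn:special_mod}, and then invoke Corollary~\ref{cor:growth_disc} with $g = C_M\phi$. The only real difference is bookkeeping: the paper restricts to scales $t$ at which the majorant equals $N_u^A(t)$ and applies Lemma~\ref{lem:dichot} directly there, whereas you keep the full majorant $f(r)=\max(N_0,\sup_{[r,1]}N_u^A)$ and, at a generic $r$, locate the largest intermediate scale $\rho\in[r(1-1/f(r)),r]$ with $N_u^A(\rho)\ge N_0$ to which Lemma~\ref{lem:dichot} applies. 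Your handling of the doubling estimate is also at the same level of detail as the paper's (which simply defers to the argument in Theorem~\ref{thm:iso_main2}).
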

	\begin{proof}
		Take $N_0 \gg 1$ large depending just on $\lambda$ and $n$ so that Lemma \ref{lem:dichot} is satisfied, and write $N(r) \equiv N_u^A(r)$. If we define the function
		\begin{align*}
			\tilde{N}(t) \coloneqq \sup_{r \in (t, 1)} N(t),
		\end{align*}
		then clearly $\tilde{N}$ is decreasing, and if $\tilde{N}(t) \le N_0$ for all $t \in (0,1)$, then the conclusion of the Theorem holds trivially. On the other hand, if $r_0 \in (0,1]$ is the largest value for which $\tilde{N}(r_0) \le N_0$, then it is easy to check that $\tilde{N}(t)$ satisfies the hypothesis of Corollary \ref{cor:growth_disc} for all $t \in (0, r_0)$. Indeed, we need only consider the values $t \in (0, r_0]$ for which $\tilde{N}(t) = N(t)$ (since otherwise, we can decrease $t$ without changing $\tilde{N}(t)$, and then use that $\tilde{N}$ is decreasing). For such values of $t$, Lemma \ref{lem:dichot} and the assumption \eqref{eqn:special_mod} give us that 
		\begin{align*}
			N(t(1-s/N(t))) \le N(t) + C_1 t \psi(N(t)/t) \le N(t) + C_1 C_M t \psi(N(t)) g(t) , \, \, s \in [0,1],
		\end{align*}
		and thus by definition of $\tilde{N}$ and the fact that $N(t) = \tilde{N}(t)$, 
		\begin{align*}
			\tilde{N}(t(1 - 1/ \tilde{N}(t))) \le \tilde{N}(t) + C_1C_M t \psi(\tilde{N}(t)) g(t).
		\end{align*}
		We may then apply Corollary \ref{cor:growth_disc} with $g$, since by assumption $g$ is integrable and doubling. This gives us
		\begin{align*}
			\sup_{t \in (0,1)} \tilde{N}(t) = \sup_{t \in (0,1)} N(t) \eqqcolon \ol{N} < \infty,
		\end{align*}
		which completes the proof of the boundedness of $N_u^A$. 
		
		As for the second claim, we appeal to the same approximation argument as in Lemma \ref{lem:dichot}. Fix any $r \in (0, 1]$, and now define $\epsilon \coloneqq r/(\ol{N} + M)$ for $M \gg 1$ so that $\epsilon$ is small. Normalize $u$ just as before, so that 
		\begin{align*}
			r^2 \fint_{B_r} \langle A \nabla u, \nabla u \rangle  \, dx  = \dfrac{\sigma(\partial B_1)}{\abs{B_1}} N_u^A(r), \qquad \fint_{\partial B_r} \mu_Au^2 \, d\sigma  = 1.
		\end{align*}
		If we let $v$ be the solution of \eqref{eqn:approx_soln}, then the same arguments as in Lemma \ref{lem:dichot} imply that the estimates \eqref{eqn:ola_approx} hold, and since $\ol{N} \ge N_u^A(r)$, we still obtain the estimates
		\begin{align}\label{eqn:doubling_cor}
			N_v^{\ol{A}}(r_s) \le N_u^A(r)(1 + C \omega(\epsilon)), \, \,  \abs{ \fint_{\partial B_{r_s}}  \mu_{\ol{A}} v^2 - \mu_A u^2    } \le C \omega(\epsilon)
		\end{align}
		for $ s \in [0,1]$ where $r_s = r(1-s/(\ol{N} + M))$. From the second conclusion of Lemma \ref{lem:almost_monoton} and the bound above, we see
		\begin{align*}
			\log \left( \dfrac{ r^{-n+1} H_v^{\ol{A}}(r)    }{  r_s^{-n+1} H_v^{\ol{A}}(r)  }  \right) & \le \int_{r_s}^r \dfrac{2 N_v^{\ol{A}}(t)}{t} + e(t) \, dt \\
			& \le 2N_u^{A}(r)(1 + C \omega(\epsilon)) \log(r/r_s) + C \phi(\epsilon) (r-r_s) \\
			& \le \dfrac{4 N_u^A(r)(1 + C \omega(\epsilon))}{\ol{N} + M } + C \epsilon \phi(\epsilon) \le 4  + C \omega(\epsilon) \le 5,
		\end{align*}
		as long as $M$ is large enough. In other words, we have shown that 
		\begin{align*}
			\fint_{\partial B_r} \mu_{\ol{A}} \, v^2 \le e^5 \fint_{\partial B_{r_s}} \mu_{\ol{A}} \, v^2,
		\end{align*}
		and so finally from the second estimate in \eqref{eqn:doubling_cor}, we see that 
		\begin{align*}
			\fint_{\partial B_r} \mu_A u^2 \le e^6 \fint_{\partial B_{r_s}} \mu_A u^2
		\end{align*}
		as long as $M$ is large enough, for arbitrary $s \in [0,1]$. Since the ratio $r/r_1$ is bounded away from $1$ (by a constant depending on $\ol{N}$ and $M$), the second conclusion of the Theorem readily follows; if $k_0 \in \N$ is the smallest number so that $(1-1/(\ol{N} + M))^{k_0} \le 1/4$, then applying the above inequality at most $ k_0$ times, we obtain
		\begin{align*}
			\fint_{\partial B_{r}} \mu_A u^2 \le e^{ 6 {k_0} } \fint_{\partial B_{r/2}} \mu_A u^2.
		\end{align*}
		Recalling that $\mu_A \simeq 1$, yields the doubling inequality as claimed.
	\end{proof}

	\section{Appendix}
	
	Here we list some elementary estimates for solutions of uniformly elliptic equations. The first is a basic approximate-identity computation which we omit.
	
	\begin{lemma}\label{lem:approx_id}
		Let $f: B_r \subset \R^n \ra \R$ have modulus of continuity $\omega$, and let $\eta \in C^\infty_c(\R^n)$ be a nonnegative function satisfying $\spt \eta \subset B_{1}$, and $\int \eta \, dx= 1$. Writing $f_\epsilon = f * \eta_\epsilon$ for $\epsilon \in (0,r)$, we have the estimates
		\begin{align*}
			\abs{f_\epsilon(x) - f(x)} \le \omega(\epsilon), \, \, 	\abs{\nabla f_\epsilon(x)} \le C \epsilon^{-1} \omega(\epsilon), \, \, x \in B_{r - \epsilon},
		\end{align*}
		where $C$ depends just on the dimension $n$ and $\int  \abs{ \nabla \eta} \, dx $. 
	\end{lemma}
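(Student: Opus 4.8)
The plan is to reduce both bounds to the elementary facts that $\eta_\epsilon(z) = \epsilon^{-n}\eta(z/\epsilon)$ is supported in $B_\epsilon$, has integral one, and has gradient of integral zero. First I would record that for $x \in B_{r-\epsilon}$ the ball $B_\epsilon(x)$ lies inside $B_r$, so that $f_\epsilon(x) = \int_{B_\epsilon} f(x-z)\,\eta_\epsilon(z)\,dz$ makes sense and differentiation under the integral sign is legitimate; the restriction $x\in B_{r-\epsilon}$ is there precisely so that $f$ is only ever evaluated in $B_r$.

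For the $C^0$ estimate I would write $f(x) = f(x)\int\eta_\epsilon$, subtract, and use that $\omega$ is non-decreasing to bound $\abs{f(x-z)-f(x)} \le \omega(\abs{z}) \le \omega(\epsilon)$ on $\spt\eta_\epsilon$, giving
\[
\abs{f_\epsilon(x)-f(x)} \le \int_{B_\epsilon} \abs{f(x-z)-f(x)}\,\eta_\epsilon(z)\,dz \le \omega(\epsilon).
\]
For the gradient estimate, the one mildly nontrivial move is to use $\int \nabla\eta_\epsilon(z)\,dz = 0$ (valid since $\eta_\epsilon$ has compact support) to insert the constant $f(x)$ before estimating:
\[
\nabla f_\epsilon(x) = \int_{B_\epsilon} \bigl(f(x-z)-f(x)\bigr)\,\nabla\eta_\epsilon(z)\,dz .
\]
Then $\abs{\nabla\eta_\epsilon(z)} = \epsilon^{-n-1}\abs{(\nabla\eta)(z/\epsilon)}$, the same modulus-of-continuity bound applies on the support, and the change of variables $z=\epsilon w$ collapses the factor $\epsilon^{-n-1}$ against the Jacobian $\epsilon^{n}$, leaving $\abs{\nabla f_\epsilon(x)} \le \epsilon^{-1}\omega(\epsilon)\int\abs{\nabla\eta}\,dw$, i.e. the claimed inequality with $C = \int\abs{\nabla\eta}\,dx$.

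There is essentially no obstacle here: this is exactly the standard mollification argument, and the only points needing a line of justification are the mean-zero identity $\int\nabla\eta_\epsilon = 0$ (so that subtracting $f(x)$ does not change $\nabla f_\epsilon$) and the bookkeeping of the scaling in $\eta_\epsilon$. I would present it in two or three lines, which is presumably why the paper simply states it.
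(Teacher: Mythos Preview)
Your argument is correct and is exactly the standard mollification computation the paper has in mind; the paper itself omits the proof entirely, calling it ``a basic approximate-identity computation which we omit,'' so there is nothing to compare against beyond noting that your write-up supplies precisely the two or three lines one would expect.
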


	Next, we have a simple quantitative stability for solutions of elliptic equations, which for completeness we prove. First, we remind the reader of the notation
	\begin{align*}
		\abs{\nabla_A u}^2 \coloneqq A \nabla u \cdot \nabla u,
	\end{align*}
	whenever $A$ is uniformly elliptic and symmetric.

	\begin{lemma}\label{lem:qst2_fix}
		Let $A_0, A_1$ be two uniformly elliptic, symmetric matrices (both with ellipticity constant $\lambda_0 \in (0,1)$). Let $u_0 = u_1$ on $\partial B_1$ and $-\divv(A_i \nabla u_i) =0$ in $B_1$ for $i=0,1$, and suppose that for some $r \in (0,1)$, we have that 
		\begin{enumerate}[(a)]
			\item $\abs{A_0 - A_1} \le \epsilon$ in $B_1 \setminus B_r$, \label{cond:qst1_fix}
			\item $\int_{B_r} \abs{\nabla u_0}^2 \le \delta  \int_{B_1} \abs{\nabla u_0}^2$, \label{cond:qst2_fix}
		\end{enumerate}
		for $\epsilon, \delta \in (0,1)$ sufficiently small. Then 
		\begin{align*}
			\int_{ B_1} \abs{ \nabla (u_0 - u_1) }^2 \le C_1 \max\{\epsilon^2, \delta \} \min \left \{  \int_{B_1} \abs{ \nabla u_0}^2, \int_{B_1} \abs{ \nabla u_1}^2 \right \}.
		\end{align*}
		with constant $C_1 >0$ depending on the dimension $n$ and $\lambda$.
	\end{lemma}
	
	\begin{proof}
		Since $u_0 = u_1$ on $\partial B_1$, it is straight-forward to see via energy minimization and ellipticity of $A_0, A_1$ that
		\begin{align*}
			\int_{B_1} \abs{\nabla_{A_0} u_0}^2 \le \int_{B_1} \abs{\nabla_{A_0} u_1}^2 \le \lambda_0^{-2} \int_{B_1} \abs{\nabla_{A_1} u_1}^2, \\
			\int_{B_1} \abs{\nabla_{A_1} u_1}^2 \le \int_{B_1} \abs{\nabla_{A_1} u_0}^2 \le \lambda_0^{-2} \int_{B_1} \abs{\nabla_{A_0} u_0}^2,
		\end{align*}
		so that $\int_{B_1} \abs{\nabla u_0}^2 \simeq_{\lambda_0} \int_{B_1} \abs{\nabla u_1}^2$. Using the weak formulation for $u_1$ and then $u_0$, we also see that 
		\begin{align*}
			\int_{B_1} A_1 \nabla (u_0 - u_1) &  \cdot \nabla(u_0 - u_1) \\
			& =\int_{B_1} A_1 \nabla u_0 \cdot \nabla (u_0 - u_1)\\
			& = \int_{B_1} (A_1 - A_0) \nabla u_0 \cdot \nabla (u_0 - u_1) \\
			&  \le \epsilon \int_{B_1 \setminus B_r} \abs{\nabla u_0} \abs{ \nabla (u_0 - u_1)} + 2 \lambda^{-1} \int_{B_r} \abs{\nabla u_0} \abs{ \nabla (u_0 - u_1)} \\
			& \le \epsilon \left( \int_{B_1} \abs{\nabla u_0}^2 \right)^{1/2} \left( \int_{B_1 } \abs{\nabla (u_0 - u_1)}^2 \right)^{1/2} \\
			& \qquad + 2 \lambda^{-1} \delta^{1/2} \left( \int_{B_1} \abs{\nabla u_0}^2 \right)^{1/2} \left( \int_{B_1} \abs{\nabla (u_0 - u_1) }^2 \right)^{1/2}.
		\end{align*}
		Using the ellipticity of $A_1$, we conclude that
		\begin{align*}
			\int_{B_1} \abs{ \nabla (u_0 - u_1)}^2 & \le \lambda^{-1} \int_{B_1} A_1 \nabla (u_0 - u_1) \cdot \nabla (u_0 - u_1) \\
			&  \le (\epsilon + 2\lambda^{-1} \delta^{1/2}) \left( \int_{B_1} \abs{\nabla u_0}^2 \right)^{1/2} \left( \int_{B_1} \abs{\nabla (u_0 - u_1) }^2 \right)^{1/2},
		\end{align*}
		and so rearranging then allows us to conclude that 
		\begin{align*}
			\int_{B_1} \abs{\nabla (u_0 - u_1)}^2 \le \lambda^{-2} \left( \epsilon^2 + 4 \lambda^{-1} \epsilon \delta^{1/2} + 4 \lambda^{-2} \delta  \right) \int_{B_1} \abs{\nabla u_0}^2,
		\end{align*}
		and the Lemma then follows.
	\end{proof}

\bibliographystyle{alpha}
\bibliography{bibl}

\end{document}